\documentclass[11pt]{article}

\usepackage{graphicx}
\usepackage{amsthm,amsmath,amssymb,tikz,bm}
\usepackage{mathtools}
\usepackage{xifthen}
\usepackage{listings}
\usepackage{comment}
\usepackage{thmtools}
\usepackage[margin=1.0in]{geometry}
\usepackage[shortlabels]{enumitem}
\usepackage{todonotes}
\usepackage[colorlinks=true,
linkcolor=blue,citecolor=blue,
urlcolor=blue]{hyperref}

\usetikzlibrary{calc,shapes,backgrounds,positioning}

\tikzset{
    comp/.style={
        draw,
        rounded corners=2pt,
        minimum width=9mm,
        minimum height=18mm,
        align=center
    },
    smallcomp/.style={
        draw,
        minimum width=10mm,
        minimum height=8mm,
        align=center
    },
    blob/.style={
        draw,
        ellipse,
        minimum width=5.8cm,
        minimum height=2.2cm
    },
    region/.style={
        draw,
        ellipse,
        minimum width=11mm,
        minimum height=22mm,
        align=center
    },
    vtx/.style={
        circle,
        fill,
        inner sep=1.3pt
    }
}

\newtheorem{theorem}{Theorem}[section]
\newtheorem{lemma}[theorem]{Lemma}
\newtheorem{corollary}[theorem]{Corollary}

\newtheorem{claim}{Claim}

\newtheorem{conjecture}{Conjecture}
\newtheorem{proposition}[theorem]{Proposition}



\makeatletter
               {\list{}{\leftmargin=0pt
                        \labelwidth\z@ \itemindent-\leftmargin
                        }}%
               {\endlist}
\makeatother

\title{4-Arc-Pancyclicity of Regular Multipartite Tournaments}
\author{
Weihao Xia \thanks{Department of Mathematics, Louisiana State University,
Baton Rouge, LA 70803 (\texttt{wxia3@lsu.edu}).}
}
\date{}

\begin{document}

\maketitle
\begin{abstract}
A multipartite tournament is an orientation of a complete multipartite graph.
We prove that every $r$-regular $c$-partite tournament with common partite-set cardinality $\alpha$ is $4$-arc-pancyclic whenever $c\ge93$; that is, every arc belongs to a cycle of each length from $4$ to $c\alpha$.
 This confirms the conjecture of Zhou and Zhang for all sufficiently large $c$ and provides a multipartite analog of Alspach's arc-pancyclicity theorem. Moreover, we also give a construction to show that 4-arc-pancyclic is the best possible.
Next, we prove that every arc belongs to at least $c\alpha-\alpha-1$ cycles of pairwise distinct lengths when $c\ge7$ and $\alpha\ge2$.
For regular $3$-partite tournaments with common partite-set cardinality $\alpha\ge2$, we obtain the sharp lower bound $\alpha$, settling the remaining case of a conjecture of Xia, Cai, Guo, and Wang.
\end{abstract}

\section{Introduction}

A multipartite tournament is an orientation of a complete multipartite graph, and a tournament is the special case in which every partite set is a singleton.
A digraph is \emph{$r$-regular} if $d^+(v)=d^-(v)=r$ for every vertex $v$. For a multipartite tournament,
regularity forces all partite sets to have the  same cardinality, which will be denoted by $\alpha$.
For an $r$-regular $c$-partite tournament with partite sets of cardinality $\alpha$, we have $r=\frac{(c-1)\alpha}{2}$.

An arc $e$ (or a vertex $v$) of a digraph $D$ is called \emph{$k$-pancyclic} if $e$ (or $v$) belongs to a cycle of length $\ell$ for every integer $\ell$ satisfying
$k\le \ell\le |V(D)|$.
The digraph $D$ is \emph{$k$-arc-pancyclic (or $k$-vertex-pancyclic)} if every arc (or vertex) of $D$ is $k$-pancyclic. When $k=3$, we simply say that $D$ is \emph{arc-pancyclic} or \emph{vertex-pancyclic}, respectively.

Alspach \cite{Alspach1967} proved that every regular tournament is arc-pancyclic.
Regular multipartite tournaments may contain an arc belonging to no triangle, so arc-pancyclicity cannot be guaranteed in general.
This leads to the question of whether every arc belongs to a cycle of each length from $4$ to the order of the $c$-partite tournament. 
Zhou and Zhang \cite{ZhouZhang2002} proved that every arc of a regular $c$-partite tournament with $c\ge6$ belongs to a cycle of each length from $4$ to $c$. 
They also constructed a regular $5$-partite tournament containing an arc that does not belong to a $4$-cycle.
Hence, they proposed the following conjecture.
\begin{conjecture}[\cite{ZhouZhang2002}]\label{conj:four-arc-pancyclic}
Let $c\ge6$, and let $T$ be an $r$-regular
$c$-partite tournament in which every partite set has cardinality
$\alpha$.
Then  $T$ is 4-arc-pancyclic. 
\end{conjecture}

Our main result confirms Conjecture~\ref{conj:four-arc-pancyclic} when $c\ge93$.
It provides a multipartite analog of Alspach's arc-pancyclicity theorem.
For $\alpha\ge2$, Proposition~\ref{prop:noC3} shows that the starting length $4$ cannot be replaced by $3$.

\begin{theorem}\label{thm:asymptotic-n-minus-three}
Let $c\ge93$, and let $T$ be an $r$-regular $c$-partite tournament in which every partite set has cardinality $\alpha$.
Then $T$ is $4$-arc-pancyclic.
\end{theorem}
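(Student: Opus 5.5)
Fix an arc $uv$ of $T$ with $u\in V_1$ and $v\in V_2$, and write $n=c\alpha$. The plan is to build cycles through $uv$ of every length $\ell$ with $4\le \ell\le n$ in three regimes: short, middle and long. Each regime uses the large number of partite sets ($c\ge 93$) together with regularity, $d^+(x)=d^-(x)=\frac{(c-1)\alpha}{2}$.

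\emph{Short cycles.} First I would show that $uv$ lies on a $4$-cycle. Let $A=N^+(v)$ and $B=N^-(u)$; each has size $\frac{(c-1)\alpha}{2}$. I need $a\in A$ and $b\in B$ with $a\to b$, in different partite sets and avoiding $u,v$. Suppose no such arc exists. Then every arc between $A$ and $B$ goes from $B$ to $A$. A counting argument on arcs leaving $A$ then contradicts regularity: vertices of $A$ must send their out-arcs only into $A\cup\{u,v\}\cup(V\setminus(A\cup B))$, and the set $V\setminus(A\cup B)$ is small when $|A\cap B|$ is small. The case where $A\cap B$ is large has to be handled separately.

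Next, I would extend a cycle $C$ of length $\ell$ through $uv$ to one of length $\ell+1$ by a standard insertion. I look for an arc $xy$ of $C$ other than $uv$, and a vertex $w\notin C$ adjacent to both $x$ and $y$, with $x\to w\to y$. As long as $\ell\le \epsilon n$, I would show by averaging that such a $w$ exists. If none existed, every outside vertex would be dominated by, or would dominate, long segments of $C$. Comparing the in-degrees and out-degrees inside $C$ with regularity then gives a contradiction, since $\ell\ll n$ and partite constraints remove at most $2\alpha$ candidates per arc.

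\emph{Long cycles.} For lengths near $n$, I would use known Hamiltonicity results for regular multipartite tournaments: such digraphs are Hamiltonian, and Yeo-type results show that they are highly connected. Removing a small set $S$ of vertices ($|S|\le n-\ell$) while keeping $uv$ leaves an almost-regular multipartite tournament. In it, degrees differ by at most $|S|$, and every partite set has size at most $\alpha$ with many partite sets present. I would then apply a Hamiltonian path/cycle criterion for almost-regular multipartite tournaments, namely global irregularity small relative to $\alpha$, of the Volkmann or Yeo kind, to get a cycle through $uv$ on $V\setminus S$. The set $S$ must be chosen so that no partite set becomes dominant. For example, take $S$ spread evenly across the partite sets, so that the remaining digraph stays nearly balanced and the irregularity bound $\le |S|$ fits the known criterion.

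\emph{Main obstacle.} The hard part is the middle range, where neither the insertion argument nor the near-Hamiltonian argument applies directly. This is where the constant $93$ should come from. I would try to make the two regimes overlap. The insertion step needs $\ell\le n/2$ or so, so I would strengthen the averaging using regularity. The removal step needs the irregularity bound to tolerate removing up to $n/2$ vertices. Failing that, I would iterate a shortening step that replaces two consecutive cycle vertices by one vertex adjacent to both, and optimize the resulting inequalities in $c$. If I am honest about where the plan might fail, it is in making the constants close exactly at $c\ge 93$.
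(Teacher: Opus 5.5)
Your proposal has genuine gaps in the two places where the proof's real work lies. You have also put the main difficulty in the wrong regime.

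\textbf{Short and middle lengths.} Your insertion step is unproved, and as stated it is not valid in multipartite tournaments. A vertex $w\notin V(C)$ with both in- and out-neighbors on $C$ may switch orientation only across a vertex of its own partite set ($x\to w$, $x^+$ nonadjacent to $w$, $w\to x^{++}$). That lets you swap $x^+$ for $w$ but not lengthen $C$. Beyond $\epsilon n$ you give no argument at all.

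The missing idea is that no vertex-by-vertex extension is needed. Let $P=p_1\cdots p_t$ be a path such that $D-V(P)$ has a Hamiltonian cycle $v_0v_1\cdots v_{m-1}v_0$, and suppose $|N^-(p_1)\setminus V(P)|+|N^+(p_t)\setminus V(P)|\ge m+1$. Then for each shift $q\in\{0,\ldots,m-1\}$, pigeonhole gives an index $i$ with $p_t\to v_i$ and $v_{i+q}\to p_1$ (indices mod $m$). So $P$ lies on cycles of every length $t+1,\ldots,t+m$ at once.

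Regularity makes the degree condition automatic, since $|N^-(u)|+|N^+(v)|=2r=n-\alpha$. If $uv$ lies on a triangle, delete $\alpha-1$ vertices outside $N^-(u)\cup N^+(v)\cup\{u,v\}$. If it does not, it lies on a $4$-cycle $uvwau$ (every arc of a regular multipartite tournament lies on a $3$- or $4$-cycle); take $P=uvw$ and delete $\alpha-2$ vertices outside $N^-(u)\cup N^+(w)\cup\{u,v,w\}$. In both cases the condition holds with equality. The remaining $n-\alpha-1$ vertices span a Hamiltonian digraph by Yeo's global-irregularity criterion once $c\ge7$, because deleting $\alpha+1$ vertices leaves $i_g\le\alpha+1$. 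This gives every length from $4$ up to at least $n-\alpha+1$, so the middle range is the easy part.

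\textbf{Long lengths.} The real difficulty, and the source of the constant $93$, is the top range $n-\alpha+2\le\ell\le n$. Here your plan has a second gap. Yeo's criterion gives \emph{some} Hamiltonian cycle of $T-S$, and the Volkmann--Yeo theorem gives a Hamiltonian path with a prescribed initial segment. Neither puts a prescribed arc on a Hamiltonian cycle, and connectivity does not either. You need a device that forces $uv$ into the cycle.

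The paper proves that $T-Z$ has a Hamiltonian cycle through $uv$ for every $Z$ with $|Z|\le\alpha$ avoiding $u$ and $v$, by absorption and contraction:
\begin{enumerate}[(1)]
\item Set $G=T-Z$ and $A=N_G^-(u)$. Choose $z$ of maximum out-degree in $G[V(G)\setminus(A\cup\{u,v\})]$. Then the set $B$ of vertices outside $A\cup N_G^+(z)\cup\{u,v,z\}$ has at most $n/4+3\alpha/4$ elements.
\item Cover $B\cup\{z\}$ by at most $\alpha$ paths, using that colour classes of a colouring injective on each partite set induce tournaments. The path containing $z$ ends at $z$.
\item Link $uv$ and these paths into one path $P$ from $u$ through $v$ to $z$, using connectors of length at most six. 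These exist because, within three steps, a vertex reaches and is reached from more than half of the still-available vertices.
\item Contract $P$ to a new singleton partite set $v_P$ with $(A\cap W)\to v_P\to(W\setminus A)$, where $W=V(G)\setminus V(P)\subseteq A\cup N_G^+(z)$.
\item Every degree stays at most $r$, so $i_g\le |Z|+|V(P)|$, which fits Yeo's bound once $c\ge93$.
\item A Hamiltonian cycle of the contracted digraph passing through $a\to v_P\to b$ expands to a Hamiltonian cycle of $G$ through $aPb$, since $a\to u$ and $z\to b$.
\end{enumerate}
Some step of this kind is indispensable for the long cycles, and your proposal does not contain one.
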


Lemma~\ref{lem:T-Z} shows that every arc lies on a Hamiltonian cycle after deleting any set of at most $\alpha$ vertices avoiding its endpoints.
These cycles provide the longest cycle lengths needed in the proof of Theorem~\ref{thm:asymptotic-n-minus-three}.

The following theorem supplies the shorter cycle lengths needed in the proof of Theorem \ref{thm:asymptotic-n-minus-three} and holds under the weaker assumption $c\ge7$.

\begin{theorem}\label{thm:alphac-alpha-1}
Let $c\ge7$ and $\alpha\ge2$, and let $T$ be an $r$-regular $c$-partite tournament in which every partite set has cardinality $\alpha$.
Then every arc of $T$ belongs to at least $c\alpha-\alpha-1$ cycles of pairwise distinct lengths.
Moreover, the following statements hold for every arc $xy$.
\begin{enumerate}[(1)]
\item If $xy\in C_3$, then $xy$ belongs to a cycle of length $\ell$ for every $\ell\in\{3,4,\ldots,c\alpha-\alpha+1\}$.
\item If $xy\notin C_3$, then  $xy$ belongs to a cycle of length $\ell$ for every $\ell\in\{4,5,\ldots,c\alpha-\alpha+2\}$.
\end{enumerate}
\end{theorem}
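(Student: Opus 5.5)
The plan is to build cycles through $xy$ incrementally, starting from a short cycle and repeatedly lengthening it by one vertex. The key structural fact is that in an $r$-regular $c$-partite tournament with $r=(c-1)\alpha/2$, every vertex $v$ has exactly $\alpha-1$ non-neighbours. Hence $|N^+(v)\cap N^-(w)|$ is large for any two vertices $v,w$. A short count gives at least roughly $(c-1)\alpha/2-\alpha$ common "middle" vertices whenever $w\to v$ or $v,w$ are in the same partite set. The proof will have two stages: a base stage producing short cycles through $xy$, and an induction that extends a cycle of length $\ell$ to one of length $\ell+1$.

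\textbf{Base stage.} If $xy$ lies in a $3$-cycle $xyzx$, we take that as the starting cycle of length $3$. Otherwise $N^+(y)\cap N^-(x)=\emptyset$. Regularity then forces $N^+(y)$ and $N^-(x)$ to cover almost all of $V(T)\setminus(V_x\cup V_y)$. The count gives $|N^+(y)|+|N^-(x)|=2r=(c-1)\alpha$, and each of the other $c-2$ partite sets contributes at most $\alpha$, so we must use arcs from $N^+(y)$ to $N^-(x)$. We will show that some $u\in N^+(y)$, $w\in N^-(x)$ satisfy $u\to w$, which yields the $4$-cycle $xyuwx$. The argument is a counting one: if every arc between $N^+(y)$ and $N^-(x)$ pointed backwards, vertices of $N^-(x)$ would have out-degree exceeding $r$. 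For $c\ge7$ the inequalities leave enough room; this is where the hypothesis on $c$ enters, in the style of Zhou and Zhang's result for lengths up to $c$.

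\textbf{Extension stage.} Suppose $C$ is a cycle of length $\ell$ through $xy$ and $\ell<c\alpha-\alpha+1$ (respectively $+2$). We seek either a vertex $v\notin C$ and an arc $ab$ of $C$ other than $xy$ with $a\to v\to b$, which gives an $(\ell+1)$-cycle still containing $xy$, or a two-vertex swap that does the same. If no such insertion exists, each outside vertex $v$ is "consistent" on $C$: along $C\setminus\{xy\}$ its arcs to $C$ go first into $v$ and then out of $v$, apart from non-adjacencies. Since $|V(T)\setminus V(C)|\ge\alpha$ roughly, there are outside vertices in at least two partite sets, and they are adjacent to each other. We then use the standard multipartite-tournament lemma: if $v\to w$ are outside vertices, we replace a segment of $C$ by a path through $v,w$, skipping one cycle vertex, and again gain length exactly one. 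The main obstacle is this step. Non-adjacent vertices inside $C$ (same partite set as $v$) break the clean "in-then-out" pattern, and the bound $c\alpha-\alpha+1$ suggests the argument fails once only one partite set's worth of vertices remains outside. I would first attempt a degree count: a non-insertable vertex $v$ has about $\ell/2$ in- and out-neighbours on $C$, so at most about $(c\alpha-\ell)$ neighbours outside must absorb the rest of its degree $r$. Comparing with $r=(c-1)\alpha/2$ gives a contradiction as long as at least $\alpha+1$ (respectively $\alpha$) vertices lie outside $C$.

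\textbf{Conclusion.} Combining the two stages, case (1) gives cycles of all lengths $3,\dots,c\alpha-\alpha+1$ and case (2) gives all lengths $4,\dots,c\alpha-\alpha+2$. Either way, $xy$ lies on at least $c\alpha-\alpha-1$ cycles of pairwise distinct lengths, which proves the first claim.
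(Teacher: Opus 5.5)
\textbf{The gap is the extension stage.} Your base stage is essentially fine. Double counting the arcs leaving $A=N^-(x)$, with $|E(T[A])|\ge r(r-\alpha)/2$ and Lemma~\ref{lem:cut} for $\varepsilon(A,N^+(y))$, shows that if no arc went from $N^+(y)$ to $A$ then $r\le 2\alpha-2$, which is impossible already for $c\ge5$. (The paper simply cites Lemma~\ref{lem:short-cycle}.) The extension stage, however, carries the whole theorem and has no working mechanism. There are four problems.
\begin{enumerate}[(1)]
\item Non-insertability does not give ``about $\ell/2$ in- and out-neighbours on $C$''. Along $y\,c_3\cdots c_\ell\,x$ it only says that an in-neighbour of $v$ is never immediately followed by an out-neighbour. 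So between consecutive non-neighbours of $v$ (at most $\alpha-1$ on $C$), $v$ sees a block of out-neighbours followed by a block of in-neighbours, of arbitrary sizes.
\item No degree count can rule this pattern out. Regularity only forces $d^{\pm}_C(v)\ge r-(c\alpha-\ell-1)$, and such a block pattern satisfies this for every $\ell$. Moreover, a count of this kind could only bite when \emph{few} vertices lie outside $C$, not ``as long as at least $\alpha+1$'' do.
\item The two-vertex swap needs a specific configuration, such as $a\to v\to w\to a^{++}$ with $a^+\notin\{x,y\}$, and you never produce one.
\item Your plan invokes $c\ge7$ only in the base stage. Its extension step would therefore turn a $3$-cycle through any arc into a $4$-cycle through that arc in every regular $5$-partite tournament, since with $\ell=3$ there are $5\alpha-3\ge\alpha+1$ outside vertices. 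But Zhou and Zhang's regular $5$-partite example has an arc on no $4$-cycle, and by Lemma~\ref{lem:short-cycle} that arc lies on a $3$-cycle. Any correct extension step must therefore use the hypothesis on $c$ essentially.
\end{enumerate}

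\textbf{What the paper does instead.} It sidesteps incremental extension altogether. Put $A=N^-(x)$. Let $S=N^+(y)$ if $xy$ lies on a triangle, and otherwise let $S=N^+(w)$ for a $4$-cycle $xywax$. Since $|A|+|S|=2r=c\alpha-\alpha$ and $A\cap S\neq\varnothing$, at least $\alpha-1$ (resp.\ $\alpha-2$) vertices lie outside $A\cup S$ and the path $xy$ (resp.\ $xyw$); delete exactly that many. The graph $H$ remaining off the path is then $T$ minus exactly $\alpha+1$ vertices. Hence $i_g(H)\le\alpha+1\le((c-4)\alpha+1)/2$, and Theorem~\ref{thm:yeo-hc} gives a Hamiltonian cycle $v_0v_1\cdots v_{m-1}v_0$ of $H$ with $m=c\alpha-\alpha-1$. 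This is the only place $c\ge7$ is needed. Because $|A|+|S|=m+1$, for every shift $q\in\{0,\ldots,m-1\}$ pigeonhole gives an index $i$ with the last path vertex dominating $v_i$ and $v_{i+q}\to x$ (Lemma~\ref{lem:path-extension}). So all lengths $3,\ldots,c\alpha-\alpha+1$ (resp.\ $4,\ldots,c\alpha-\alpha+2$) appear at once, and the upper limits in the statement come exactly from $|A|+|S|=2r$.
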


Since $|V(T)|=c\alpha$, Theorem~\ref{thm:asymptotic-n-minus-three} immediately yields the following corollary.
\begin{corollary}
Let $c\ge93$, and let $T$ be an $r$-regular $c$-partite tournament in which every partite set has cardinality $\alpha$.
Then every arc of $T$ belongs to at least $c\alpha-3$ cycles of pairwise distinct lengths.
\end{corollary}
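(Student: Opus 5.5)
The corollary is a counting consequence of Theorem~\ref{thm:asymptotic-n-minus-three}, so I will not re-enter the structural arguments. Fix an arbitrary arc $xy$ of $T$. Since $c\ge93$, Theorem~\ref{thm:asymptotic-n-minus-three} says $T$ is $4$-arc-pancyclic, so $xy$ lies on a cycle of length $\ell$ for every integer $\ell$ with $4\le \ell\le |V(T)|$.

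Next I recall that $|V(T)|=c\alpha$, because there are $c$ partite sets each of cardinality $\alpha$. The lengths $\ell$ in the interval $\{4,5,\ldots,c\alpha\}$ are pairwise distinct integers, and there are exactly $c\alpha-4+1=c\alpha-3$ of them. For each such $\ell$ I choose one cycle $C_\ell$ through $xy$ of length $\ell$. Distinct lengths give distinct cycles, so this produces $c\alpha-3$ cycles through $xy$ of pairwise distinct lengths.

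The range of lengths is nonempty, since $c\alpha\ge 93>4$. As $xy$ was arbitrary, the bound holds for every arc. There is no real obstacle here: all of the difficulty lies in Theorem~\ref{thm:asymptotic-n-minus-three}, which we assume, and the only thing to check is the count $c\alpha-3$ of integers in $[4,c\alpha]$.
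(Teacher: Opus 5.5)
Your proposal is correct and matches the paper's argument: the corollary is stated there as an immediate consequence of Theorem~\ref{thm:asymptotic-n-minus-three} together with $|V(T)|=c\alpha$. The only remaining check is that $\{4,5,\ldots,c\alpha\}$ contains exactly $c\alpha-3$ integers, which you verify.
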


For $\alpha=2$, Theorem~\ref{thm:alphac-alpha-1}, together with the computation for $c\in\{4,5,6\}$ given in the Appendix, yields the following corollary. 

\begin{corollary}\label{cor:alpha-two}
Let $c\ge4$, and let $T$ be a $(c-1)$-regular $c$-partite tournament in which every partite set has cardinality two.
Then every arc of $T$ belongs to at least $2c-3$ cycles of pairwise distinct lengths.
\end{corollary}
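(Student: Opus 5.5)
The statement to prove is Corollary~\ref{cor:alpha-two}. Here $\alpha=2$ and $r=c-1$, so the bound $c\alpha-\alpha-1$ of Theorem~\ref{thm:alphac-alpha-1} becomes $2c-3$. The proof therefore splits into the range $c\ge7$, where Theorem~\ref{thm:alphac-alpha-1} applies directly, and the finitely many small values $c\in\{4,5,6\}$.

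For $c\ge7$ there is nothing new to prove. We have $\alpha=2\ge2$, so Theorem~\ref{thm:alphac-alpha-1} gives, for every arc $xy$, at least $2c-3$ cycles through $xy$ of pairwise distinct lengths. As a sanity check, the two cases of that theorem give this explicitly.
\begin{enumerate}[(1)]
\item If $xy$ lies on a $3$-cycle, it lies on cycles of every length in $\{3,\dots,2c-1\}$, which is $2c-3$ values.
\item If $xy$ lies on no $3$-cycle, it lies on cycles of every length in $\{4,\dots,2c\}$, which is again $2c-3$ values.
\end{enumerate}

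For $c\in\{4,5,6\}$ the order is $2c\le12$, and I would settle these cases by exhaustive computation, as the paper's Appendix does. The plan has three steps.
\begin{enumerate}[(1)]
\item Fix the partite sets $\{v_{2i-1},v_{2i}\}$ for $i=1,\dots,c$ and enumerate all orientations of the complete multipartite graph $K_{2,\dots,2}$ in which every vertex has out-degree $c-1$. The number of edges is $\binom{2c}{2}-c$, at most $60$ when $c=6$, so naive enumeration is too large. Instead, orient edges vertex by vertex with degree-feasibility pruning, and reduce modulo isomorphism using a canonical form.
\item For each digraph obtained and each arc $xy$, compute the set of lengths $\ell$ for which some cycle through $xy$ has length $\ell$. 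This is a DFS for simple paths from $y$ back to $x$, and it is cheap on at most $12$ vertices.
\item Verify that each such set has size at least $2c-3$.
\end{enumerate}
To cut the work, we may assume by symmetry that $xy$ ranges over arc-orbits of the automorphism group. We may also use the converse digraph, which reverses $xy$ and preserves regularity.

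The main obstacle is making the $c=6$ enumeration feasible and trustworthy. I would first use orderly generation, or a tool such as nauty's \texttt{directg} applied to $K_{2,2,2,2,2,2}$ with an out-degree filter, and cross-check the resulting count of regular orientations for the smaller $c$ by a second, independent method. A human argument for these small cases is conceivable, but it would need to replicate the structure of the proof of Theorem~\ref{thm:alphac-alpha-1} with weaker slack. The computational route is what the corollary explicitly relies on, so that is the route I would take.
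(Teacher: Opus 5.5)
Your proposal matches the paper's proof: for $c\ge 7$ you apply Theorem~\ref{thm:alphac-alpha-1} with $\alpha=2$, so $c\alpha-\alpha-1=2c-3$, and you settle $c\in\{4,5,6\}$ by exhaustive computer search. The only difference is in implementation. The paper encodes the small cases as a single SAT instance, namely a $(c-1)$-regular orientation with $x\to y$ fixed, symmetry breaking, and at most $2c-4$ cycle lengths through $xy$, and shows it is UNSAT, whereas you enumerate regular orientations up to isomorphism; for $c=6$ any generator such as \texttt{directg} must prune by out-degree during generation, because post-filtering all $2^{60}$ orientations (about $2.5\times 10^{13}$ up to isomorphism) is infeasible.
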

\begin{proof}
Suppose that $c\ge7$, then by Theorem~\ref{thm:alphac-alpha-1}, we are done.
For $c\in\{4,5,6\}$, the assertion was verified by the exhaustive SageMath computation given in the Appendix.
\end{proof}

The behavior of regular multipartite tournaments differs between the cases $c=3$ and $c\ge4$.
Volkmann \cite{Volkmann2006} showed that, for every fixed integer $t\geq3$, there exist infinitely many regular $3$-partite tournaments containing an arc that belongs to no  $t$-cycle. Thus, in the tripartite case, no prescribed actual cycle length can be guaranteed uniformly for every arc. Cycles through a given arc with lengths in a prescribed short interval have been studied in \cite{GuoCuiMeng2016,GuoMeng2019,Volkmann2006}.

Recently, Xia, Cai, Guo, and Wang  \cite{XiaCaiGuoWang2026} proved that, for $c\ge 3$, every arc of a regular $c$-partite tournament lies on at least $c-2$ cycles of pairwise distinct lengths.
They proposed the following conjecture for $r$-regular multipartite tournaments.

\begin{conjecture}[\cite{XiaCaiGuoWang2026}]\label{conj:c+a-3}
Let $T$ be an $r$-regular $c$-partite tournament with partite sets of cardinality $\alpha$, where $c\ge3$.
Then every arc of $T$ belongs to at least $c+\alpha-3$ cycles of pairwise distinct lengths.
\end{conjecture}

For $\alpha=1$, Conjecture~\ref{conj:c+a-3} follows from Alspach's theorem.
For $c\ge4$ and $\alpha\ge2$, Theorem~\ref{thm:vypath} and Lemma~\ref{lem:r-cycle} give at least $r=(c-1)\alpha/2$ distinct cycle lengths through every arc.
Thus only the case $c=3$ and $\alpha\ge2$ remains.
Our next result settles this case.

\begin{theorem}\label{thm:c=3}
Let $\alpha\ge2$, and let $T$ be an $r$-regular $3$-partite tournament in which every partite set has cardinality $\alpha$.
Then every arc of $T$ belongs to at least $\alpha$ cycles of pairwise distinct lengths.
Moreover, this bound is tight.
\end{theorem}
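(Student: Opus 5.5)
Fix an arc $xy$ of $T$ with $x\in V_1$ and $y\in V_2$, and let $V_1,V_2,V_3$ be the partite sets, so that $r=\alpha$.
The plan is to turn cycles through $xy$ into $(y,x)$-paths and to exhibit $\alpha$ such paths with pairwise distinct lengths.
The natural tool is Theorem~\ref{thm:vypath} together with Lemma~\ref{lem:r-cycle}.
For $c\ge4$ these give $r$ distinct cycle lengths through $xy$, and here $r=\alpha$ is exactly the target.
So the first step is to check which hypotheses of those results fail when $c=3$, and to redo the argument by hand in that case.

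\textbf{Setting up.} Every out-neighbour of $y$ lies in $V_1\cup V_3$, and every in-neighbour of $x$ lies in $V_2\cup V_3$.
Regularity gives $|N^+(y)|=|N^-(x)|=\alpha$.
Double counting arcs between partite sets shows $|N^+(v)\cap V_i|+|N^-(v)\cap V_i|=\alpha$ for each $v\notin V_i$.
It also shows that the numbers of arcs from $V_i$ to $V_j$ and from $V_j$ to $V_i$ balance cyclically; precisely, $a_{12}-a_{21}=a_{23}-a_{32}=a_{31}-a_{13}$.

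\textbf{Growing the path.} Build a chain of $(y,x)$-paths $P_1,P_2,\dots$ with strictly increasing lengths, starting from a shortest one.
A shortest $(y,x)$-path exists because $T$ is strongly connected, and regular multipartite tournaments are strong.
Its length is at most about $4$, because in a $3$-partite regular digraph the distance is bounded.
At each stage, let $P$ be the current path and $W=V(T)\setminus V(P)$.
I would try to insert a vertex of $W$, or a pair of vertices, into $P$: find consecutive $u,u'$ on $P$ and $w\in W$ with $u\to w\to u'$, or a short detour.
This is done by counting arcs between $W$ and $P$.
If insertion fails for every $w\in W$, then every $w$ "dominates or is dominated" along $P$ in a structured way.
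Regularity then forces $|W|$ to be small, i.e.\ $|V(P)|\ge 3\alpha-\alpha=2\alpha$ roughly.
Starting from length at most about $4$ and increasing by $1$ or $2$ each step until $|V(P)|\approx 2\alpha$ yields at least about $\alpha$ distinct lengths.
To get exactly $\alpha$ rather than $\alpha-O(1)$, I would combine this with a separate count of long paths at the top end.

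\textbf{Main obstacle.} The hard step is guaranteeing that the insertion increases the length by exactly a controlled amount, ideally $+1$, and does not stall.
The difficulty is that vertices in the same partite set as both neighbours on $P$ cannot be inserted between them.
I would first try a parity and partite-set bookkeeping argument: if no vertex can be inserted singly, then inserting two vertices from different partite sets is possible.
Step sizes therefore lie in $\{1,2\}$, and I would argue that steps of size $2$ can occur at most about $\alpha/2$ times before $W$ is exhausted.

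\textbf{Tightness.} For $\alpha\ge2$, take $V_i=\{v_i^1,\dots,v_i^\alpha\}$ with all arcs $V_1\to V_2\to V_3\to V_1$ except a small modification.
In the unmodified digraph every cycle has length divisible by $3$, so an arc lies only on cycles of lengths $3,6,\dots,3\alpha$, which is exactly $\alpha$ lengths.
This digraph is already $\alpha$-regular, so this example shows the bound $\alpha$ is attained.
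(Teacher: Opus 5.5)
\textbf{Tightness is fine; the lower bound is not proved.} Your tightness example works as it stands, and you can drop the phrase about a ``small modification''. In the blow-up with all arcs $V_1\to V_2\to V_3\to V_1$, the digraph is $\alpha$-regular, every cycle has length divisible by $3$, and an arc lies on cycles of exactly the lengths $3,6,\ldots,3\alpha$.

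The step your scheme rests on is false: namely, that if no single vertex of $W$ can be inserted into $P$, then two vertices from different partite sets can, so every step has size $1$ or $2$. Your own example refutes it. There every $(y,x)$-path has length $\equiv 2\pmod 3$, so no insertion or detour can change the length by anything other than a multiple of $3$. With steps of size $3$, growing from about $3$ vertices to your stopping point of roughly $2\alpha$ vertices gives only about $2\alpha/3$ lengths, so the count breaks down.

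The remaining ingredients are only asserted:
\begin{itemize}
\item Nothing shows that failure of insertion together with regularity forces $|W|$ to be at most about $\alpha$. In a multipartite tournament, the vertices of $P$ nonadjacent to $w$ destroy the usual tournament structure in which the in-neighbours of a non-insertable $w$ form a final segment of $P$.
\item The ``separate count of long paths at the top end'', which is supposed to upgrade $\alpha-O(1)$ to $\alpha$, is never specified.
\end{itemize}

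\textbf{The missing idea.} It is the one you mention at the start and then abandon. It suffices to show that $xy$ is the initial arc of a Hamiltonian path $xyx_1\cdots x_{3\alpha-2}$. Then each of the $r=\alpha$ in-neighbours $x_i$ of $x$ closes a cycle $xyx_1\cdots x_ix$ of length $i+2$ (Lemma~\ref{lem:r-cycle}).

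The hypothesis of Theorem~\ref{thm:vypath} fails for $c=3$, so the paper proves this directly in Lemma~\ref{lem:hp}:
\begin{enumerate}
\item Split the $\alpha$-regular bipartite graph encoding the arcs of $T$ into $\alpha$ perfect matchings, which gives a cycle factor through $xy$.
\item Replace $x$ by a new singleton-class vertex $w$ whose only out-neighbour is $y$ and which every other vertex dominates.
\item Take a cycle factor with the fewest cycles and use Theorem~\ref{thm:yeo} to show that the cycle $C$ through $w$ is the last one, and that $\mu\ge s+2\lambda$ for the arcs between $C$ and the remaining $s$ vertices.
\item The degree imbalance of the modified digraph, together with Lemma~\ref{lem:cut}, rules out more than one cycle. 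The resulting Hamiltonian cycle $wy\cdots w$ gives the required path.
\end{enumerate}
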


The proof of Theorem \ref{thm:c=3} relies on Lemma~\ref{lem:hp}, which shows that every arc of an $r$-regular $3$-partite tournament with partite sets of cardinality $\alpha\ge2$ is the initial arc of a Hamiltonian path.

The contrast between Theorem~\ref{thm:c=3} and
Theorem~\ref{thm:asymptotic-n-minus-three} suggests that the case
$c\geq4$ should behave fundamentally differently from the tripartite
case. We therefore propose the following conjecture.

\begin{conjecture}\label{conj:n-3}
Let $c\ge4$ and $\alpha\ge2$, and let $T$ be an $r$-regular
$c$-partite tournament in which every partite set has cardinality
$\alpha$.
Then every arc of $T$ belongs to at least $c\alpha-3$ cycles of pairwise distinct lengths.
\end{conjecture}

For related results on vertex-pancyclicity, see \cite{GuoVolkmann1994,Moon1966,Yeo1999Vertex}.
Other multipartite analogues of Alspach's theorem, formulated in terms of the partite sets met by paths and cycles, can be found in \cite{CXY2023,GuoKwak2002,PanZhang2004}.
For surveys of paths and cycles in multipartite tournaments, see \cite{Volkmann2002,Volkmann2007}.

\medskip

Section~\ref{sec:prescribed-paths} proves the tripartite result, Theorem~\ref{thm:c=3}.
Section~\ref{sec:alpha-two} proves Theorem~\ref{thm:alphac-alpha-1}.
Section~\ref{sec:c>200} proves our main result, Theorem \ref{thm:asymptotic-n-minus-three},  and gives a construction showing that $4$-arc-pancyclic is the best possible.

\medskip
\noindent
\textbf{Notation.}
Unless stated otherwise, all paths and cycles in this paper are directed.
For a vertex $v$ on a cycle, let $v^+$ and $v^-$ denote its successor and predecessor, respectively.
If $a$ and $b$ are on a path or cycle $C$, let $C[a,b]$ denote the segment from $a$ to $b$, including both ends.
For disjoint vertex sets $A$ and $B$, let $E(A,B)$ be the set of arcs $ab$ with $a\in A$ and $b\in B$, and let $\varepsilon(A,B)=|E(A,B)|+|E(B,A)|$.
If $v\notin V(C)$, let $N_C^+(v)=\{u\in V(C):v\to u\}$,
$N_C^-(v)=\{u\in V(C):u\to v\}$, and $N_C(v)=N_C^+(v)\cup N_C^-(v)$.
Let $C$ and $Q$ be vertex-disjoint cycles.
A vertex $v\in V(Q)$ is \emph{out-singular} with respect to $C$ if
$N_C(v)=N_C^+(v)$, and it is \emph{in-singular} with respect to $C$ if
$N_C(v)=N_C^-(v)$.
A vertex is \emph{singular} with respect to $C$ if it is either
out-singular or in-singular with respect to $C$.
A cycle factor of a digraph $D$ is a collection of pairwise vertex-disjoint cycles covering $V(D)$.

\section{Proof of Theorem \ref{thm:c=3}}\label{sec:prescribed-paths}

To prove Theorem~\ref{thm:c=3}, we need the following lemmas. 

\begin{lemma}\label{lem:r-cycle}
Let $D$ be an $r$-regular oriented graph.
If  $xy$ is the initial arc of a Hamiltonian path of $D$, then $xy$ belongs to at least $r$  cycles of pairwise distinct lengths.
\end{lemma}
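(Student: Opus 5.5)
Let $P=v_1v_2\cdots v_n$ be a Hamiltonian path of $D$ with $v_1=x$ and $v_2=y$, where $n=|V(D)|$. The plan is to close $P$ into cycles using the in-neighbours of $x$. Every in-neighbour $v_i$ of $x$ gives the cycle $v_1v_2\cdots v_iv_1$. This cycle contains the arc $xy$ and has length exactly $i$.

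First, locate the in-neighbours of $x$. Since $D$ is $r$-regular, $x$ has exactly $r$ in-neighbours, and all of them lie on $P$ because $P$ is Hamiltonian. Also $v_2=y$ is an out-neighbour of $x$. Since $D$ is oriented, $y\not\to x$, so $y$ is not an in-neighbour of $x$. Hence every in-neighbour of $x$ is some $v_i$ with $3\le i\le n$.

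Next, build the cycles. For each in-neighbour $v_i$ of $x$, the segment $P[v_1,v_i]$ followed by the arc $v_ix$ is a directed cycle. Its vertices are $v_1,\dots,v_i$, so its length is $i$, and it contains $xy$ because $i\ge 3>2$. Distinct in-neighbours have distinct indices $i$, so they give cycles of pairwise distinct lengths.

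Since there are $r$ in-neighbours, this yields $r$ cycles through $xy$ of pairwise distinct lengths, which proves the lemma. The only point needing care is excluding $y$ as an in-neighbour of $x$, and that follows directly from $D$ being oriented. I expect no real obstacle here.
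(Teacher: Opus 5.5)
Your proposal is correct and is essentially the paper's proof: close the Hamiltonian path back to $x$ through each of its $r$ in-neighbours, which all lie at distinct positions after $y$, giving $r$ cycles through $xy$ of distinct lengths. The only difference is that you explicitly note that $y\not\to x$ because $D$ is oriented, a point the paper leaves implicit when it indexes the in-neighbours as $x_1,\ldots,x_{n-2}$.
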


\begin{proof}
Let  $xyx_1x_2\cdots x_{n-2}$ be the  Hamiltonian path where $n=|V(D)|$.
Since $D$ is $r$-regular, every vertex of $D$ has exactly $r$ in-neighbors. 
There are exactly $r$ indices $i\in\{1,\ldots,n-2\}$ such that $x_i\to x$.
For each $i$, $xyx_1\cdots x_ix$ is a cycle containing $xy$ with length $i+2\ge 3$.
 Thus, $xy$ belongs to at least $r$  cycles of pairwise distinct lengths.
\end{proof}

\begin{theorem}[\cite{BangJensenWangYeo2025,Yeo1997}]\label{thm:yeo}
Let $F=C_1\cup C_2\cup\cdots\cup C_t$ be a cycle factor of a multipartite tournament $D$, where $t$ is minimum among all cycle factors of $D$.
Then the cycles of $F$ can be indexed so that the following statements hold. For every $i<j$,
\begin{enumerate}[(1)]
\item $C_i$ has singular vertices with respect to $C_j$, all of which are out-singular, and $C_j$ has singular vertices with respect to $C_i$, all of which are in-singular.
\item if $E(C_j,C_i)\ne\varnothing$, there is a partite set $P_{ij}$ such that every arc $y\to u$ from $C_j$ to $C_i$ satisfies $u^-,y^+\in P_{ij}$, and $u\to y^+$ and $u^-\to y$ are arcs.
\end{enumerate}
\end{theorem}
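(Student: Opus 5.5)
The statement immediately above is Theorem~\ref{thm:yeo}, Yeo's structure theorem for minimum cycle factors in multipartite tournaments. The paper cites it from \cite{BangJensenWangYeo2025,Yeo1997}, so strictly it is assumed. Still, here is how I would prove it.

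\textbf{Basic exchange argument.} Fix a cycle factor $F$ with the minimum number $t$ of cycles. Take two cycles $C$ and $Q$ of $F$, and suppose some vertex $y\in V(Q)$ has an arc to $u\in V(C)$ and some vertex $y'$ (possibly equal to $y$) has an arc from a vertex of $C$. The key configuration is $u^-\to y^+$ together with $y\to u$: then $C[u,u^-]$ and $Q[y^+,y]$ splice into a single cycle $u\,C\,u^-\,y^+\,Q\,y\,u$. This cycle covers $V(C)\cup V(Q)$ and would reduce $t$. Hence minimality gives the following. Whenever $y\to u$ with $y\in Q$ and $u\in C$, the pair $u^-,y^+$ is either non-adjacent (same partite set) or oriented $y^+\to u^-$.

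\textbf{Step 1: a linear order between each pair.} For each pair $C,Q$ I would show that arcs in both directions force a contradiction unless the structure in (2) holds. Walk around $C$ and record, for a fixed $y\in Q$, the arcs between $y$ and $C$. Since the graph is complete multipartite, every vertex of $C$ outside $y$'s partite set is adjacent to $y$. If the neighbours of $y$ on $C$ contain both in- and out-neighbours, look at a consecutive pair on $C$ where the orientation switches. A direction switch $u^-\to y\to u$, with $u^-$ and $u$ both adjacent to $y$, already lets us insert $y$ into $C$. Combining such insertions with the exchange above, one shows that $E(Q,C)\neq\varnothing$ and $E(C,Q)\neq\varnothing$ forces an arc $y\to u$ with $u^-\to y^+$, giving a merge, unless $u^-$ and $y^+$ lie in a common partite set.

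From this I would define a relation $C\prec Q$ whenever $E(C,Q)$ "dominates". The next task is to show it is acyclic, so that it extends to an indexing $C_1,\dots,C_t$. For acyclicity, a directed cyclic sequence of dominations among three or more cycles should let one rotate arcs and merge several cycles at once. This is the standard multi-cycle exchange, and it again contradicts minimality.

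\textbf{Step 2: the partite set $P_{ij}$ in (2).} For $i<j$ and a backward arc $y\to u$ from $C_j$ to $C_i$, the exchange forces $u^-$ and $y^+$ to be non-adjacent, so they lie in one partite set. I would then show this set $P_{ij}$ is independent of the chosen backward arc, by chaining two backward arcs and using the exchange on mixed pairs. The arcs $u\to y^+$ and $u^-\to y$ follow from the forward direction $C_i\Rightarrow C_j$: if either were reversed, we would obtain a forward-backward configuration admitting a merge.

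\textbf{Step 3: singular vertices in (1).} Singular vertices exist because $y^+$ is adjacent to all of $C_i$ except its partite class. One shows that a vertex of $C_j$ with an out-arc to $C_i$ has successors which are in-singular, and dually for $C_i$. The remaining claim is that no vertex of $C_i$ is in-singular with respect to $C_j$. An in-singular vertex there, combined with the dominance $C_i\Rightarrow C_j$, would give a switch and hence a merge.

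\textbf{Main obstacle.} I expect the hardest part to be Step 1: the consistent global ordering, and in particular ruling out cyclic dominance among three or more cycles. That is where Yeo's original argument does the real work, and I would follow it closely rather than reprove it.
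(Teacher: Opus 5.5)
The paper does not prove this theorem. It is quoted from Yeo and from Bang-Jensen, Wang and Yeo, so treating it as assumed is exactly what the paper does. The sketch you offer as a proof, however, has a genuine gap.

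The only consequence of minimality you actually formulate is the two-arc exchange $u\,C\,u^-\,y^+\,Q\,y\,u$. Inserting $y$ into $C$ at a switch gives no contradiction, since $Q-y$ is then a path and the number of cycles does not drop. That exchange is provably too weak to yield (1) or the uniformity of $P_{ij}$. Take the $2$-regular $3$-partite tournament with partite sets $\{a_k,b_k\}$, $k=1,2,3$. Its cycles are $C=a_1a_2a_3a_1$ and $Q=b_1b_2b_3b_1$, and its six cross arcs form the $6$-cycle $a_1b_3a_2b_1a_3b_2a_1$.
\begin{itemize}
\item For every cross arc $p\to q$, the vertices $q^-$ and $p^+$ lie in a common partite set. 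So your exchange is unavailable in both directions.
\item Yet every vertex has exactly one in-neighbour and one out-neighbour on the other cycle. Neither cycle has a singular vertex with respect to the other, so (1) fails for both orderings.
\item In either ordering, the three backward arcs determine three different partite sets, so no single $P_{ij}$ exists.
\end{itemize}
The factor $\{C,Q\}$ is not minimum, but the witness is the Hamiltonian cycle $a_1b_3a_2b_1a_3b_2a_1$, which uses six cross arcs rather than two.

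So Steps 1--3 cannot be completed with the tools you name, and several individual inferences are unjustified.
\begin{itemize}
\item In Step 2 you drop an alternative. Your exchange paragraph correctly gives only ``$u^-,y^+$ in one partite set \emph{or} $y^+\to u^-$'', but Step 2 asserts non-adjacency outright. Ruling out $y^+\to u^-$ for backward arcs, and showing the partite set does not depend on the arc, is precisely the content of (2).
\item In Step 3, the existence of singular vertices does not follow from $y^+$ being adjacent to most of $C_i$, since adjacency says nothing about orientation. The example has no singular vertices at all.
\item A switch $u^-\to y\to u$ by consecutive neighbours need not exist in a multipartite tournament, because the switch may occur across a vertex of $y$'s own partite set.
\item Your relation ``dominates'' is never defined, so the claimed acyclicity for three or more cycles has no content yet.
\end{itemize}

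What is missing is a merging argument that recombines two cycles along many cross arcs. In Yeo's treatment this is, roughly, the statement that a cycle with no singular vertex with respect to another can be merged with it into one cycle on their union. You would also need the further exchanges that fix the type of the singular vertices and make the ordering consistent.
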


\begin{lemma}\label{lem:cut}
Let $D$ be a multipartite tournament in which every partite set has cardinality at most $\alpha$, and let $A,B\subseteq V(D)$ be disjoint and nonempty.
Then
\[
\varepsilon(A,B)\left(\frac{1}{|A|}+\frac{1}{|B|}\right)\ge |A|+|B|-\alpha.
\]
\end{lemma}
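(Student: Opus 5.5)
The plan is to reduce the inequality to a statement about the traces of $A$ and $B$ on the partite sets, and then prove that statement by a pairwise symmetrization.

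\textbf{Reduction.} Let $P_1,\dots,P_k$ be the partite sets of $D$, and write $a_i=|A\cap P_i|$ and $b_i=|B\cap P_i|$. Then $\sum_i a_i=|A|$, $\sum_i b_i=|B|$, and $a_i+b_i\le\alpha$ for every $i$, because $A$ and $B$ are disjoint. Two vertices $u\in A$, $v\in B$ are adjacent exactly when they lie in different partite sets, and each such pair contributes exactly one arc. Hence
\[
\varepsilon(A,B)=|A||B|-\sum_i a_ib_i .
\]
Multiplying by $\frac1{|A|}+\frac1{|B|}=\frac{|A|+|B|}{|A||B|}$, the claimed inequality becomes equivalent to
\[
\Bigl(\sum_i a_ib_i\Bigr)\bigl(|A|+|B|\bigr)\le \alpha\,|A||B|.
\]

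\textbf{Double-sum form.} Write $|A|+|B|=\sum_j (a_j+b_j)$ and $|A||B|=\sum_{i,j}a_ib_j$. Regrouping, the left side equals $\sum_{i,j}a_ib_j\,(b_i+a_j)$. So it suffices to prove
\[
\sum_{i,j}a_ib_j(b_i+a_j)\le\alpha\sum_{i,j}a_ib_j .
\]

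\textbf{Pairwise comparison.} I will compare the two sides term by term after symmetrizing.

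\begin{enumerate}[(i)]
\item Diagonal terms ($i=j$): each contributes $a_ib_i(a_i+b_i)\le\alpha a_ib_i$, since $a_i+b_i\le\alpha$.
\item Off-diagonal terms: for an unordered pair $i\ne j$, set $X=a_ib_j$ and $Y=a_jb_i$. The pair contributes $X(b_i+a_j)+Y(b_j+a_i)$ on the left and $\alpha(X+Y)$ on the right. Both of the quantities
\[
X(a_j+b_j)+Y(a_i+b_i)\quad\text{and}\quad X(a_i+b_i)+Y(a_j+b_j)
\]
are at most $\alpha(X+Y)$. Direct expansion gives
\[
X(b_i+a_j)+Y(b_j+a_i)-\bigl[X(a_j+b_j)+Y(a_i+b_i)\bigr]=(b_i-b_j)(X-Y),
\]
\[
X(b_i+a_j)+Y(b_j+a_i)-\bigl[X(a_i+b_i)+Y(a_j+b_j)\bigr]=(a_j-a_i)(X-Y).
\]
\end{enumerate}

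\textbf{Main obstacle.} The crux is to show that at least one of these two differences is $\le 0$. Suppose $X>Y$ and both differences are positive. Then $b_i>b_j$ and $a_j>a_i$, so $Y=a_jb_i>a_ib_j=X$, a contradiction. The case $X<Y$ is symmetric, and the case $X=Y$ is trivial since both differences vanish. Hence each off-diagonal pair contributes at most $\alpha(X+Y)$ to the left side. Summing over the diagonal terms and all unordered pairs gives the inequality, which completes the plan.
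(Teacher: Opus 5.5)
Your proof is correct, and after the common first step $\varepsilon(A,B)=|A||B|-\sum_i a_ib_i$ it takes a different route from the paper. The paper writes $a_i=s_i\theta_i$ and $b_i=s_i(1-\theta_i)$ with $s_i=a_i+b_i>0$. It uses $s_i\le\alpha$ to get $\sum_i a_ib_i\le\alpha\sum_i s_i\theta_i(1-\theta_i)$. It then applies concavity of $t(1-t)$ (Jensen with weights $s_i/(|A|+|B|)$, whose weighted mean of the $\theta_i$ is $|A|/(|A|+|B|)$) to obtain $\sum_i a_ib_i\le \alpha|A||B|/(|A|+|B|)$ in two lines.

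You instead clear denominators and expand $(\sum_i a_ib_i)(|A|+|B|)=\sum_{i,j}a_ib_j(b_i+a_j)$. You then symmetrize the $(i,j)$ and $(j,i)$ terms, using the rearrangement-type fact that $(b_i-b_j)(X-Y)$ and $(a_j-a_i)(X-Y)$ cannot both be positive when $X=a_ib_j$ and $Y=a_jb_i$. Both expansions and the sign argument check out.

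The paper's route is shorter and more structural. It factors the bound through the $\alpha$-free inequality $\sum_i a_ib_i/(a_i+b_i)\le |A||B|/(|A|+|B|)$, so the only losses are $s_i\le\alpha$ and a single Jensen step. Your route is purely finite and elementary, with no convexity and no division by $s_i$, so empty traces need no separate treatment. The cost is a longer pairwise case analysis that proves the combined inequality directly rather than through that intermediate bound.
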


\begin{proof}
Let $x=|A|$ and $y=|B|$.
For each partite set $V_i$, let $x_i=|A\cap V_i|,$  $y_i=|B\cap V_i|,$ and $ s_i=x_i+y_i.$
There are $\sum_i x_iy_i$ nonedges between $A$ and $B$.

For each $i$ with $s_i>0$, let $x_i=s_i\theta_i$ and $y_i=s_i(1-\theta_i)$.
Since $s_i\le\alpha$, the concavity of $t(1-t)$ gives
\[
\sum_i x_iy_i
\le \alpha\sum_i s_i\theta_i(1-\theta_i)
\le \alpha(x+y)\frac{x}{x+y}\frac{y}{x+y}
=\frac{\alpha xy}{x+y}.
\]
Hence,
\[
\varepsilon(A,B)=xy-\sum_i x_iy_i
\ge \frac{xy(x+y-\alpha)}{x+y}.\qedhere
\]
\end{proof}

For a digraph $D$, its global irregularity is defined by
\[
i_g(D)=\max_{v\in V(D)}\{d^+(v),d^-(v)\}-\min_{v\in V(D)}\{d^+(v),d^-(v)\}.
\]

\begin{theorem}[\cite{VolkmannYeo2004}]
\label{thm:vypath}
Let $D$ be a $c$-partite tournament with partite sets $V_1,V_2,\ldots,V_c$, where
$|V_1|\le |V_2|\le\cdots\le |V_c|$.
Let $P$ be a path of length $p$ in $D$.
If
\[
i_g(D)\le
\frac{|V(D)|-3p-2|V_c|-|V_{c-1}|+2}{2}
\]
then $D$ has a Hamiltonian path whose initial segment is $P$.
\end{theorem}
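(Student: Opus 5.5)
This is the cited result of Volkmann and Yeo~\cite{VolkmannYeo2004}. To reprove it, I would use a maximal-path argument in which the global irregularity bound is converted into a contradiction by degree counting. Write $n=|V(D)|$ and $P=x_0x_1\cdots x_p$. Among all paths with initial segment $P$, choose a longest one, $Q=Px_{p+1}\cdots x_m$. Among those, choose one that also maximizes a secondary parameter, such as the number of vertices of the last partite set covered, so that exchange arguments are available. Suppose for a contradiction that $R=V(D)\setminus V(Q)\neq\varnothing$.

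\textbf{Step 1 (structure at the end).} Maximality gives $N^+(x_m)\subseteq V(Q)$. Likewise, for every $v\in R$ there is no index $i\ge p$ with $x_i\to v\to x_{i+1}$, since otherwise $v$ could be inserted after $P$. So along the tail $x_p x_{p+1}\cdots x_m$, whenever $x_i\to v$ the next vertex $x_{i+1}$ is either an in-neighbour of $v$ or nonadjacent to $v$. The in-neighbourhood of $v$ in the tail is therefore a union of terminal runs, and each run can be broken only at a vertex of $v$'s own partite set, of which there are at most $|V_c|-1$. I would use the secondary maximality to rule out patterns such as $x_i\to v$, $x_{i+1}$ nonadjacent to $v$, $v\to x_{i+2}$. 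In that situation one replaces $x_{i+1}$ by $v$ and pushes $x_{i+1}$ into $R$. This keeps the length, so the choice of $Q$ must forbid the configuration, or else $x_{i+1}$ itself becomes insertable.

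\textbf{Step 2 (counting).} Combining Step 1 with $N^+(x_m)\subseteq V(Q)$, I would show that some vertex is strongly unbalanced. One candidate is a vertex $v\in R$ whose out-neighbours on the tail are forced to precede essentially all its in-neighbours. Considering also $x_m$, and the arcs $x_m\to x_j$ that would create a cycle usable to reroute the path through $v$, yields an inequality of the form
\[
d^-(u)-d^+(w)\ \ge\ n-3p-2|V_c|-|V_{c-1}|+O(1)
\]
for suitable vertices $u,w$. The $3p$ term absorbs the vertices of $P$, which cannot be used for insertion, together with the boundary effects at $x_p$ and at neighbouring positions. The term $2|V_c|$ accounts for the vertices nonadjacent to $v$ and to $x_m$. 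The term $|V_{c-1}|$ covers the case where $v$ and $x_m$ lie in the same largest partite set, so that the second nonadjacency count comes from the next-largest set. Since $d^-(u)-d^+(w)\le i_g(D)$, the hypothesis then gives a contradiction once the constants are fixed to produce the ``$+2$'' and the factor $1/2$. The factor $1/2$ should come from adding the two inequalities obtained for $v$ and for $x_m$.

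\textbf{Main obstacle.} I expect the main difficulty to be the treatment of nonadjacent vertices in Step 1. In an ordinary tournament the ``no insertion'' condition forces a clean split of $v$'s neighbourhood into out-neighbours followed by in-neighbours. Here each vertex of $v$'s partite set can reset this pattern, and the exchange argument needs the right secondary extremal choice of $Q$ to control these resets. I would first try taking $Q$ lexicographically maximal in (length, number of vertices of the largest partite set on $Q$). If that does not suffice, I would instead strengthen the induction by proving the statement for a path $P$ together with a prescribed final vertex set, so that reroutings through $x_m$ remain admissible.
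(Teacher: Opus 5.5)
\textbf{Gap.} The paper does not prove this statement; it quotes it from \cite{VolkmannYeo2004}. On its own terms, your sketch has a genuine gap. Step~2, which must produce the contradiction, is not an argument. You assert $d^-(u)-d^+(w)\ge n-3p-2|V_c|-|V_{c-1}|+O(1)$ for unspecified $u,w$, and then interpret the terms to fit the target bound.

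Nothing in Step~1 forces any vertex to be unbalanced. For $v\in R$ you only learn that each maximal run of in-neighbors of $v$ along the tail is followed by a vertex of $v$'s own partite set. Such runs can be long, and there can be up to $|V_c|-1$ of them. So $v$ may have about as many in-neighbors as out-neighbors on the tail. In a tournament there are no breaks, so $v$ must dominate the whole tail and a two-sided count finishes. That dichotomy is exactly what is lost here.

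Your one concrete repair also fails. In the configuration $x_i\to v$, $x_{i+1}$ nonadjacent to $v$, $v\to x_{i+2}$, the vertices $v$ and $x_{i+1}$ are nonadjacent, hence in the same partite set. Swapping them leaves the number of vertices of every partite set on $Q$ unchanged, so your secondary parameter rules out nothing. The fallback ``or else $x_{i+1}$ becomes insertable'' has no justification. More broadly, the plan has no global ingredient to replace the missing count, such as the cycle-factor structure of Theorem~\ref{thm:yeo} that the paper uses for the analogous Lemma~\ref{lem:hp}.

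\textbf{Missing idea.} No new argument is needed: the statement is an immediate corollary of Theorem~\ref{thm:yeo-hc}, by the same deletion trick used in Claim~\ref{cl:T-hc}.

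Write $P=x_0x_1\cdots x_p$ and let $D'=D-\{x_0,\ldots,x_{p-1}\}$. Deleting $p$ vertices raises no degree and lowers each in- and out-degree by at most $p$, so $i_g(D')\le i_g(D)+p$. Moreover $|V(D')|=|V(D)|-p$. The largest and second-largest partite sets of $D'$ have at most $|V_c|$ and $|V_{c-1}|$ vertices, respectively. Hence
\[
i_g(D')\le\frac{|V(D)|-3p-2|V_c|-|V_{c-1}|+2}{2}+p=\frac{|V(D')|-2|V_c|-|V_{c-1}|+2}{2},
\]
which implies Yeo's condition for $D'$. For $p\ge1$ the hypothesis also forces $D'$ to meet at least two partite sets, so no degenerate case arises.

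Thus $D'$ has a Hamiltonian cycle. Traversing it from $x_p$ and prefixing $x_0\cdots x_{p-1}$ gives a Hamiltonian path of $D$ with initial segment $P$. This also explains the $3p$: a loss of $p$ in the order, plus a loss of $p$ in the irregularity counted twice because of the factor $\tfrac12$. It does not come from boundary effects of insertion.
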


Let $c\ge4$ and $\alpha\ge2$, and let $T$ be an $r$-regular $c$-partite tournament in which every partite set has cardinality $\alpha$.
Let $xy$ be an arbitrary arc of $T$.
For $D=T$ and $P=xy$, the condition in Theorem~\ref{thm:vypath} becomes $c\alpha\ge3\alpha+1$, which holds since $(c-3)\alpha\ge2$.
Thus, $T$ has a Hamiltonian path whose initial arc is $xy$.
By Lemma~\ref{lem:r-cycle}, $xy$ belongs to at least $r$ cycles of pairwise distinct lengths.
Since $r-(c+\alpha-3)=(c-3)(\alpha-2)/2\ge0$, Conjecture~\ref{conj:c+a-3} holds for $c\ge4$ and $\alpha\ge2$.
It remains to consider the case $c=3$.

\begin{lemma}\label{lem:hp}
Let $\alpha\ge2$, and let $T$ be an $r$-regular $3$-partite tournament in which every partite set has cardinality $\alpha$.
Then every arc of $T$ is the initial arc of a Hamiltonian path.
\end{lemma}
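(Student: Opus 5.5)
Fix an arc $xy$ of $T$, with partite sets $V_1,V_2,V_3$ where $x\in V_1$, $y\in V_2$; here $r=\alpha$. The plan is to find a Hamiltonian path of $T-x$ that starts at $y$, and then prepend $x$. Theorem~\ref{thm:vypath} with $c=3$ gives nothing, since the bound $(3\alpha-3p-3\alpha+2)/2$ is negative. I will therefore argue directly with cycle factors and Theorem~\ref{thm:yeo}.

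\textbf{Step 1: a cycle factor of $T-x$ containing $y$, or a near-factor.} Deleting $x$ leaves a $3$-partite tournament $T'=T-x$ in which every vertex has out- and in-degree $\alpha$ or $\alpha-1$, so $T'$ is almost regular. I would first show that $T'-\{z\}$ has a cycle factor for a suitable $z$, or handle $T'$ directly, via a Hall-type argument. Consider the bipartite graph on two copies of $V(T')$ with an edge $u\to v$ for every arc. A $1$-factor corresponds to a cycle factor. Hall's condition for a set $S$ needs $|N^+(S)|\ge|S|$. Using Lemma~\ref{lem:cut} with $A=S$ and $B$ the complement of $N^+(S)$, together with the degree counts, I expect Hall's condition to fail only by a deficiency of at most $1$. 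Since $|V(T')|=3\alpha-1$ and partite sets are unbalanced, a genuine cycle factor of $T'$ may not exist, so I would allow the factor to omit one vertex. Equivalently, I take a maximum ``path-cycle factor'' consisting of a path starting at $y$ together with cycles. This matching argument should give a spanning collection of one path $P$ starting at $y$ and vertex-disjoint cycles $C_1,\dots,C_t$.

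\textbf{Step 2: absorb the cycles into the path.} Choose the collection with $t$ minimum and, subject to that, $P$ longest. If $t\ge1$, use the indexing and structure of Theorem~\ref{thm:yeo}, applied to the cycle factor of the subdigraph induced by $\bigcup C_i$ together with the arcs between the cycles and $P$. Look at the last vertex $w$ of $P$. If $w$ has an out-neighbour $u$ on some $C_i$, replace $P$ by $P\,u\,C_i[u^+,u^-]$. This contradicts minimality of $t$. Otherwise $w$ is in-singular with respect to every cycle, so $w$ is dominated by every vertex of $\bigcup C_i$ outside its partite set. Then $d^-(w)$ is large, namely at least $|\bigcup C_i|-(\alpha-1)$, so $\bigcup C_i$ is small. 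Next apply the same argument to earlier vertices of $P$. If some $p\in P$ has $p\to u$ and $u^-\to p^+$ (or $u^-\to$ the next vertex), the cycle $C_i$ can be inserted into $P$. If no insertion is possible, counting arcs between $P$ and $C_i$ with Lemma~\ref{lem:cut} contradicts near-regularity. Regularity enters because each $C_i$ has roughly as many arcs in as out. The tripartite structure, in which every arc between distinct partite sets exists, makes the standard ``$u^-\to p$, $p\to u$'' insertion available unless $p$ and $u$ or $u^-$ share a partite set. Theorem~\ref{thm:yeo}(2) confines such exceptions to a single partite set $P_{ij}$.

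\textbf{Main obstacle.} The hard step is the insertion argument when all arcs between $P$ and a cycle $C$ go one way except through vertices in one common partite set. This is exactly the situation of Theorem~\ref{thm:yeo}(2). I would resolve it by counting. If every vertex of $C$ not in $P_{ij}$ is singular toward $P$, then $C$ has at least $|C|-\alpha$ vertices sending all their out-arcs into $C\cup P$ in a fixed direction. Since there are only three partite sets, summing out-degrees over $C$ gives $|C|\alpha=\sum_{v\in C}d^+(v)$. Comparing this with the arcs inside $C$, which number at most $|C|(|C|-\alpha)/2$ by Lemma~\ref{lem:cut}-type bounds, yields a contradiction for $\alpha\ge2$. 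The small cases where $|C|$ is $3$ or $4$ I expect to handle by hand. Once $t=0$, the path $xP$ is a Hamiltonian path of $T$ with initial arc $xy$.
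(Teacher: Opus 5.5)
There is a genuine gap, and it lies exactly at what you call the main obstacle. Theorem~\ref{thm:yeo} describes arcs between two \emph{cycles} of a cycle factor with the minimum number of cycles. It says nothing about arcs between your path $P$ and a cycle $C_i$. So your assertion that non-insertable configurations between $P$ and $C_i$ are confined to one partite set $P_{ij}$ is unsupported. Without it, you have no argument for the central case: arcs between $P$ and $C_i$ go in both directions, yet no pair $p\to u$, $u^-\to p^+$ exists.

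The counting you offer instead does not work either. Each vertex of $C$ is nonadjacent to at most $\alpha-1$ others, so $|C|(|C|-\alpha)/2$ is a \emph{lower} bound on the number of arcs of $T[C]$, not an upper bound. Lemma~\ref{lem:cut} likewise only bounds cross arcs from below. It yields a contradiction only once you have an upper bound on arcs from $C_i$ to $P$ derived from non-insertability, and that is precisely the structural statement you lack. The cases $|C|\in\{3,4\}$ are also left open. Step~1 is only an expectation, and a Hall-deficiency argument on $T-x$ would not force the path to start at $y$. That part is easy to repair: the bipartite double cover of $T$ is $\alpha$-regular, so it is a union of $\alpha$ perfect matchings. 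One of them contains the edge coming from $xy$, which gives a cycle factor of $T$ through $xy$.

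The missing idea is to close the path into a cycle, so that Theorem~\ref{thm:yeo} applies verbatim. Delete $x$ and add a new vertex $w$ forming its own partite set, with $w\to y$ and $v\to w$ for every other $v$. A Hamiltonian cycle of the resulting digraph $D$ must use $wy$, so it yields the desired path. The cycle factor of $T$ through $xy$ becomes a cycle factor of $D$.

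\begin{itemize}
\item In a minimum cycle factor of $D$, $w$ is in-singular with respect to every other cycle. By Theorem~\ref{thm:yeo}(1), its cycle $C$ is therefore the last one.
\item By Theorem~\ref{thm:yeo}(2), each arc $z\to u$ from $C$ to the rest $R$ yields two distinct arcs $u\to z^+$ and $u^-\to z$ back into $C$, none ending at $w$. Together with the $s=|R|$ arcs from $R$ into $w$, this gives $\mu\ge s+2\lambda$.
\item The total degree excess $\mu-\lambda$ of $R$ in $D$ is at most $s+\alpha-1$. Combining these with the lower bound $2ms/3$ from Lemma~\ref{lem:cut} forces first $|C|=3$ and then a contradiction.
\end{itemize}

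This single global count replaces the cycle-by-cycle insertion analysis you were attempting.
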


\begin{proof}
Fix an arc $xy$ of $T$.
Let $T_1=\{u^1:u\in V(T)\}$ and $T_2=\{u^2:u\in V(T)\}$ be two disjoint copies of $V(T)$.
Let $\mathcal B=(T_1,T_2)$ be the bipartite graph in which $u^1v^2\in E(\mathcal B)$ if and only if $u\to v$ in $T$.
Since $T$ is $\alpha$-regular, $\mathcal B$ is also $\alpha$-regular.

Since $\mathcal B$ is $\alpha$-regular and bipartite, by Hall's Theorem, there is a perfect matching $M_1$. After deleting $M_1$, $\mathcal B-M_1$ is $(\alpha-1)$-regular and bipartite. Repeating this argument, we obtain pairwise edge-disjoint perfect matchings $M_1,\ldots,M_\alpha$ of $\mathcal{B}$.  Therefore,
\[
E(\mathcal B)=M_1\sqcup\cdots\sqcup M_\alpha.
\]
Note that a perfect matching in $\mathcal B$ corresponds to a cycle factor of $T$.

Let $M_i$ be the perfect matching containing $x^1y^2$ in $\mathcal{B}$.
The edges of $M_i$ corresponding to arcs of $T$ form a cycle factor containing $xy$.

In $T$, delete $x$ and add a new vertex $w$ such that $\{w\}$ is a singleton partite set. Let $w\to y$ and $v\to w$ for every $v\in V(T)\setminus\{x,y\}.$
Let $D$ be the resulting multipartite tournament.

Consider the cycle factor that contains $xy$.
Replace $x$ on this cycle with $w$.
The resulting cycles also form a cycle factor of $D$.

Let $F=C_1\cup C_2\cup\cdots\cup C_t$ be a cycle factor of $D$ with the minimum possible number of cycles, labeled as in Theorem~\ref{thm:yeo}.
Let $C=C_k$ be the cycle of $F$ containing $w$.
Since $y$ is the only out-neighbor of $w$, we have $wy\in E(C)$.

If $t=1$, then $F=C_1$ and $C_1$ is a Hamiltonian cycle of $D$, we are done. Suppose that $t\ge 2$. 
Note that every vertex outside $C_k$ dominates $w$.
Thus, $w$ is in-singular with respect to every other cycle of $F$.

We claim that $k=t$.
Suppose that $k<t$.
By Theorem~\ref{thm:yeo}(1), every singular vertex of $C_k$ with respect to $C_{k+1}$ is out-singular.
This contradicts the fact that $w$ is in-singular with respect to $C_{k+1}$.
Therefore, $k=t$, and hence, $C=C_t$.

Let $R=V(D)\setminus V(C)$,  $s=|R|$, and $m=|C|$. Then $m+s=3\alpha$.
Since $C$ is a cycle and $R$ has at least one cycle,  $3\le s,m\le3\alpha-3$.
Let $\lambda=|E(V(C),R)|$ and $\mu=|E(R,V(C))|$.

\begin{claim}\label{cl:mu}
 $\mu\ge s+2\lambda$.
\end{claim}

\begin{proof}
Consider an arc $e=z\to u$ from $C$ to $R$, and let $C_i$ be the cycle of $F$ containing $u$.
By Theorem~\ref{thm:yeo}(2), both $u\to z^+$ and $u^-\to z$ belong to $E(R,V(C))$. For each
$e=z\to u\in E(V(C),R)$, define $f(e)=u\to z^+$ and $g(e)=u^-\to z.$
Clearly, $f$ and $g$ are injective. 

We claim that arcs in
\[
\{f(e):e\in E(V(C),R)\}
\cup
\{g(e):e\in E(V(C),R)\}
\]
are pairwise distinct.

Since $f$ and $g$ are injective, it remains to show that the images of $f$ and $g$ are
disjoint. Suppose not. Then there are arcs
$z\to u$ and $z'\to u'$ from $C$ to $R$ such that $f(z\to u)=g(z'\to u'),$ thus $u\to z^+=u'^-\to z'.$
Therefore, $u'=u^+$ and $z'=z^+$.
Note that $u$ and $u'$ belong to the same cycle $C_i$.
Applying Theorem~\ref{thm:yeo}(2) to $z\to u$ and $z^+\to u^+$ gives $u^-,z^+\in P_{it}$ and $u,z^{++}\in P_{it}$, respectively.
This is impossible because $z^+$ and $z^{++}$ are consecutive vertices of $C$ and cannot belong to the same partite set.
Therefore, $f$ and $g$ have disjoint images, and hence, $\mu\ge2\lambda$.

Next, we want to show that none of these arcs has head $w$. Indeed, an arc $g(z\to u)=u^-\to z$ could have head $w$ only if $z=w$, but $w$ has no out-neighbor in $R$. Moreover, if $f(z\to u)=u\to z^+$ had head $w$, then $z^+=w$. By Theorem~\ref{thm:yeo}(2), $u^-$ and $z^+=w$ belong to the same partite set. Since $\{w\}$ is a singleton partite set, this implies that $u^-=w$, contrary to $u^-\in R$.

Since every vertex of $R$ dominates $w$, there are $s$ additional arcs from $R$ to $C$. Hence  $\mu\ge s+2\lambda.$
\end{proof}

Let $V_x$ be the partite set of $T$ containing $x$, and let $U_0=N_T^-(x)$, $U_1=V_x\setminus\{x\}$, and  $U_2=N_T^+(x)\setminus\{y\}$ where $U_0,U_1,U_2$ have cardinalities $\alpha,\alpha-1,\alpha-1$, respectively.
By the construction of $D$, we have 
\[
d_D^+(v)-d_D^-(v)=
\begin{cases}
0,&v\in U_0\cup\{y\},\\
1,&v\in U_1,\\
2,&v\in U_2,\\
-3(\alpha-1),&v=w.
\end{cases}
\]
For $i\in\{0,1,2\}$, let $\rho_i=|R\cap U_i|$, and let
\[
\sigma=\sum_{v\in R}\bigl(d_D^+(v)-d_D^-(v)\bigr)=2\rho_2+\rho_1.
\]
Since $w,y\in V(C)$, we have $s=\rho_0+\rho_1+\rho_2$.
Moreover, only arcs between $R$ and $C$ contribute to $\sigma$, so $\sigma=\mu-\lambda.$
By Claim~\ref{cl:mu}, $\sigma=\mu-\lambda\ge s+\lambda,$ and hence, $\lambda\le\sigma-s$.
It follows that
\[
\varepsilon(V(C),R)
=\lambda+\mu
=2\lambda+\sigma
\le3\sigma-2s.
\]

\begin{claim}
$m=3$.
\end{claim}

\begin{proof}
Since $\sigma=s+\rho_2-\rho_0\le s+\alpha-1$ and $m+s=3\alpha$, we have
\[
\varepsilon(V(C),R)\le3\sigma-2s\le s+3\alpha-3=m+2s-3.
\]
By Lemma~\ref{lem:cut}, we have $\varepsilon(V(C),R)\left(\frac{1}{m}+\frac{1}{s}\right)\ge s+m-\alpha.$
This gives us  $$m+2s-3\ge \varepsilon(V(C),R)\ge
\frac{2\alpha}{3\alpha/(ms)}
=\frac{2ms}{3}.$$
Hence, $(m-3)(2s-3)\le0$.
Since $m,s\ge3$, we have $m=3$.
\end{proof}

Since $m=3$, $s=3\alpha-3$.
Thus, $\sigma=2\rho_2+\rho_1\le3\alpha-3=s$, and hence,
\[
2s=\frac{2ms}{3}\le\varepsilon(V(C),R)\le3\sigma-2s\le s,
\]
a contradiction.

Therefore, $t=1$, and hence $F=C$, so $C=wyx_2\cdots x_{3\alpha-1}w$ is a Hamiltonian cycle of $D$.
Replacing $w$ with $x$, this gives the Hamiltonian path
$xyx_2x_3\cdots x_{3\alpha-1}$
of $T$ with initial arc $xy$.
\end{proof}

\begin{proof}[Proof of Theorem~\ref{thm:c=3}]
Fix an arc $xy$ of $T$.
By Lemma~\ref{lem:hp}, every arc of $T$ is the initial arc of a Hamiltonian path.
Since $T$ is an $r$-regular $3$-partite tournament, $r=\alpha$.
Hence, by Lemma~\ref{lem:r-cycle} there are at least $\alpha$ distinct cycle lengths through $xy$.

Moreover, the construction given by Example 3.1 of~\cite{XiaCaiGuoWang2026} shows that this bound is tight.
\end{proof}

\section{Proof  of  Theorem \ref{thm:alphac-alpha-1}}
\label{sec:alpha-two}

\begin{theorem}[\cite{Yeo1999}]
\label{thm:yeo-hc}
Let $D$ be a $c$-partite tournament with partite sets
$V_1,V_2,\ldots,V_c$, where
$|V_1|\le |V_2|\le\cdots\le |V_c|$.
If
\[
i_g(D)\le
\frac{|V(D)|-2|V_c|-|V_{c-1}|+2}{2},
\]
then $D$ has a Hamiltonian cycle.
\end{theorem}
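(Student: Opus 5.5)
We would follow the same template as the proof of Lemma~\ref{lem:hp}, with the global irregularity playing the role that regularity played there. Write $n=|V(D)|$, $\delta$ for the minimum and $\Delta$ for the maximum of all in- and out-degrees, so that $\Delta-\delta=i_g(D)$ and $|d^+(v)-d^-(v)|\le i_g(D)$ for every $v$.

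\emph{Step 1: $D$ has a cycle factor.} Form the bipartite graph $\mathcal B=(T_1,T_2)$ as in Lemma~\ref{lem:hp}, with $u^1v^2$ an edge whenever $u\to v$. We verify Hall's condition. Suppose $S\subseteq V(D)$ satisfies $|N^+(S)|<|S|$.
\begin{itemize}
\item If $S$ meets two partite sets, then $N^+(S)\cup S$ has size at least about $n-|V_c|$. A double count of arcs inside $S\cup N^+(S)$, using $d^+\ge\delta$ and $d^-\le\Delta$, then contradicts the hypothesis on $i_g(D)$.
\item If $S$ lies in a single partite set $V_j$, then $|N^+(S)|\ge\delta\ge(n-|V_c|-i_g(D))/2$. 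The hypothesis keeps this at least $|V_c|\ge|S|$.
\end{itemize}
Hence $\mathcal B$ has a perfect matching, which is a cycle factor of $D$.

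\emph{Step 2: set up the minimal factor.} Choose a cycle factor $F=C_1\cup\cdots\cup C_t$ with $t$ minimum, indexed as in Theorem~\ref{thm:yeo}, and suppose $t\ge2$. Put $C=C_t$, $R=V(D)\setminus V(C)$, $m=|C|$, $s=|R|$, $\lambda=|E(V(C),R)|$ and $\mu=|E(R,V(C))|$. Exactly as in Claim~\ref{cl:mu}, Theorem~\ref{thm:yeo}(2) gives two injections $f,g$ from $E(V(C),R)$ into $E(R,V(C))$ with disjoint images. Hence $\mu\ge2\lambda$. On the other hand,
\[
\mu-\lambda=\sum_{v\in R}\bigl(d^+(v)-d^-(v)\bigr)\le s\,i_g(D),
\]
and summing over $C$ gives the same bound with $m$ in place of $s$. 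Therefore
\[
\varepsilon(V(C),R)=2\lambda+(\mu-\lambda)\le3(\mu-\lambda)\le3\,i_g(D)\min\{m,s\}.
\]
Theorem~\ref{thm:yeo}(1) also lets us work symmetrically with $C_1$ against the rest. We would use whichever side gives the better estimate.

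\emph{Step 3: lower bound on the cut and contradiction.} We bound $\varepsilon(V(C),R)$ from below by a refinement of Lemma~\ref{lem:cut}. The nonedges between $V(C)$ and $R$ number $\sum_i x_iy_i$ with $x_i+y_i=|V_i|$. This sum is controlled by the two largest partite sets, since no partite set has more than half the vertices on a cycle: a cycle alternates away from any one partite set. Concretely, $x_i\le m/2$, and we get roughly
\[
\varepsilon\ge ms-\Bigl(|V_c|\,\tfrac{\min\{m,s\}}{2}+|V_{c-1}|\,\tfrac{\min\{m,s\}}{2}+\cdots\Bigr).
\]
Comparing this with the upper bound $3\,i_g(D)\min\{m,s\}$ from Step~2 and substituting the hypothesis $2i_g(D)\le n-2|V_c|-|V_{c-1}|+2$ should force $\min\{m,s\}$ to be tiny. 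The tiny case (cycles of length $2$ or $3$) is then excluded directly, as in the claim $m=3$ in Lemma~\ref{lem:hp}. So $t=1$, and the single cycle is Hamiltonian.

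The main obstacle is Step~3: getting the constant exactly right, namely the terms $2|V_c|+|V_{c-1}|$ and the additive $+2$. The crude concavity bound of Lemma~\ref{lem:cut} loses too much. We would first try a more careful count of the nonedges using the fact that consecutive vertices of $C$ lie in distinct partite sets. If that fails, we would try refining the injection argument of Step~2 to show $\mu\ge2\lambda+(\text{extra})$, using the singular vertices guaranteed by Theorem~\ref{thm:yeo}(1).
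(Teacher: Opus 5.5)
The paper does not prove this statement. It quotes it from Yeo (1999) and uses it as a black box, so your outline has to stand on its own, and it does not.

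\textbf{Step~3 cannot be completed as proposed.} You leave Step~3 open, and the route you suggest for it cannot work, because the loss happens in Step~2, not in the lower bound for the cut. The estimate $\varepsilon(V(C),R)\le 3\,i_g(D)\min\{m,s\}$ rests only on $\lambda\le\mu-\lambda$. Near the threshold this is too weak, however precisely the non-edges are counted, and the same loss occurs if you work with $C_1$ instead.

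Test it on tournaments, where $|V_i|=1$ for all $i$. The hypothesis reads $i_g(D)\le (n-1)/2$, and $\varepsilon(V(C),R)=ms$ exactly. Your two bounds then give only $\max\{m,s\}\le 3(n-1)/2$, which always holds, so no contradiction arises. What actually gives the contradiction there is $\lambda=0$. Theorem~\ref{thm:yeo}(2) would force $u^-=y^+$ for any backward arc $y\to u$, which is impossible for disjoint cycles. Hence $ms=\mu-\lambda\le i_g(D)\min\{m,s\}$, so $i_g(D)\ge\max\{m,s\}\ge n/2$.

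In general, the missing ingredient is a direct upper bound on the backward arcs, not on the non-edges. For example, $(y\to u)\mapsto(y^+,u^-)$ injects $E(V(C_t),V(C_i))$ into $(P_{it}\cap V(C_t))\times(P_{it}\cap V(C_i))$. Bounds of this type, or arguments through individual non-averaged vertex degrees, are how $|V_c|$ and $|V_{c-1}|$ can enter. Your template closed in Lemma~\ref{lem:hp} only because the total imbalance there was $\sigma\le s+\alpha-1$, i.e.\ bounded per vertex. Here it may be of order $n$ per vertex. Your fallback ``$\mu\ge 2\lambda+(\text{extra})$'' is not specified, so it does not supply this bound.

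\textbf{Step~1 is not a valid argument as written.} In the second bullet, $\delta\ge(n-|V_c|-i_g(D))/2\ge|V_c|$ would require $i_g(D)\le n-3|V_c|$. The hypothesis allows $i_g(D)$ up to $(n-2|V_c|-|V_{c-1}|+2)/2$, which is larger whenever $n<4|V_c|-|V_{c-1}|+2$. For example, with one partite set of size $3$ and six singletons, $i_g(D)=2$ is allowed. One can realize this with a vertex of the large part having out-degree $2<|V_c|$. In the first bullet, nothing controls the vertices outside $S\cup N^+(S)$; they need only dominate their neighbours in $S$. So the claimed size $\approx n-|V_c|$ is unsupported. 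A cycle factor does exist under the hypothesis, but showing it needs a real Hall-type count, not these estimates.
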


The following lemma gives cycles of consecutive lengths containing a prescribed path.

\begin{lemma}\label{lem:path-extension}
Let $P=p_1p_2\cdots p_t$ be a  path in a digraph $D$, and suppose that $D-V(P)$ has a Hamiltonian cycle.
If
\[
|N_D^-(p_1)\setminus V(P)|+|N_D^+(p_t)\setminus V(P)|\ge |V(D)\setminus V(P)|+1,
\]
then $P$ belongs to a cycle of length $\ell$ for every $\ell\in\{t+1,t+2,\ldots,|V(D)|\}$.
\end{lemma}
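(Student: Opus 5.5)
Let $R=V(D)\setminus V(P)$, $n=|R|$, and fix a Hamiltonian cycle $H=h_0h_1\cdots h_{n-1}h_0$ of $D-V(P)$, with indices taken modulo $n$. Put $A=N_D^-(p_1)\cap R$ (vertices that can enter $p_1$) and $B=N_D^+(p_t)\cap R$ (vertices that $p_t$ can reach). For each $\ell$ with $t+1\le\ell\le t+n=|V(D)|$, set $k=\ell-t\in\{1,\dots,n\}$. It suffices to find a segment of $H$ with exactly $k$ vertices, running from some $h_i\in B$ forward to $h_{i+k-1}\in A$. Then
\[
p_1\cdots p_t\,h_i h_{i+1}\cdots h_{i+k-1}\,p_1
\]
is a cycle of length $t+k=\ell$ containing $P$. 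When $k=1$ we need a single vertex $h_i\in A\cap B$.

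The key step is a pigeonhole argument. Fix $k$ and consider the map $\phi_k\colon h_i\mapsto h_{i+k-1}$, a bijection of $R$ given by rotation along $H$. We want some $h_i$ with $h_i\in B$ and $\phi_k(h_i)\in A$, that is, $B\cap\phi_k^{-1}(A)\neq\varnothing$. Since $\phi_k$ is a bijection, $|\phi_k^{-1}(A)|=|A|$. Both $B$ and $\phi_k^{-1}(A)$ lie in $R$, and the hypothesis gives $|A|+|B|\ge n+1>|R|$, so these two sets must intersect. This handles every $k\in\{1,\dots,n\}$, including $k=1$, where $\phi_1$ is the identity and the condition reads $A\cap B\neq\varnothing$.

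Next I check that each construction really is a cycle. The segment $h_i\cdots h_{i+k-1}$ has $k\le n$ distinct vertices, all in $R$, so they are disjoint from $V(P)$. The arcs used are the path arcs, $p_th_i$ (since $h_i\in B$), the arcs of $H$, and $h_{i+k-1}p_1$ (since $h_{i+k-1}\in A$). For $k=n$ the segment is the whole of $H$ opened at $h_i$, which gives the Hamiltonian case $\ell=|V(D)|$.

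The only potential obstacle is the degenerate case $t=1$. There $p_1=p_t$, and for $k=1$ the construction gives the closed walk $p_1h_ip_1$, which is a $2$-cycle. That requires both arcs $p_1h_i$ and $h_ip_1$ to exist, which is fine in a digraph but impossible in an oriented graph. In the intended applications $t\ge2$, or else $D$ is oriented and the counting forces $A\cap B=\varnothing$, contradicting the hypothesis; so the argument is uniform and no real difficulty arises.
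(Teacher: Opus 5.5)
Your proof is correct and is essentially the paper's argument. Your rotation $\phi_k$ with $k=q+1$ is exactly the paper's shift $B+q$, and in both proofs the pigeonhole bound $|A|+|B|\ge n+1>n$ yields the segment of the Hamiltonian cycle that closes $P$ into a cycle of length $t+k$; your remark on $t=1$ (you get a $2$-cycle in a general digraph, and in an oriented graph the hypothesis cannot hold) is valid, and the paper leaves it implicit.
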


\begin{proof}
Let $C=v_0v_1\cdots v_{n-1}v_0$ be a Hamiltonian cycle of $D-V(P)$, where $n=|V(D)\setminus V(P)|$.
Let $A=\{i:v_i\to p_1\}$ and $B=\{i:p_t\to v_i\}.$
Then $|A|+|B|\ge n+1$. Fix $q\in\{0,1,\ldots,n-1\}$, 
Let $B+q=\{i+q:i\in B\}$. Then $|B|=|B+q|$. 
Since $|A\cap(B+q)|\ge |A|+|B+q|-n\ge1$, there is an index $i\in B$ such that $i+q\in A$, where the indices are taken modulo $n$.
Thus, $P\,C[v_i,v_{i+q}]p_1$
is a cycle of length $t+q+1$.
Since $q\in\{0,1,\ldots,n-1\}$, $P$ belongs to a cycle of length $\ell$ for every $\ell\in\{t+1,t+2,\ldots,|V(D)|\}$.
\end{proof}

\begin{lemma}[\cite{GuoKwak2002,XiaCaiGuoWang2026}]\label{lem:short-cycle}
Let $T$ be an $r$-regular $c$-partite tournament with $c\ge3$.
Then every arc of $T$ belongs to a $3$-cycle or a $4$-cycle.
\end{lemma}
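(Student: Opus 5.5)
We argue by contradiction and count degrees. Fix an arc $xy$ and assume it lies on no $3$-cycle and no $4$-cycle. Put $A=N^+(y)$ and $B=N^-(x)$, so $|A|=|B|=r$. A vertex of $A\cap B$ would give a $3$-cycle $xyzx$, so $A\cap B=\varnothing$. An arc $a\to b$ with $a\in A$ and $b\in B$ would give a $4$-cycle $xyabx$, so every pair $(a,b)\in A\times B$ is either non-adjacent or oriented $b\to a$.

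Next we locate $x$ and $y$. Since $xy$ is an arc, $x$ and $y$ are adjacent, so $x\notin A$ and $y\notin B$. Trivially $x\notin B$ and $y\notin A$. Hence $W=V(T)\setminus(A\cup B)$ contains both $x$ and $y$. Because $r=(c-1)\alpha/2$ and $|V(T)|=c\alpha$, we get $|W|=c\alpha-2r=\alpha$. Put $W'=W\setminus\{x,y\}$, so $|W'|=\alpha-2$.

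The main step is to count the arcs leaving $A$. Each $a\in A$ has out-degree $r$, and it has no out-neighbour in $B$. It has no out-neighbour $y$, since $y\to a$, and no out-neighbour $x$, since otherwise $a\in B$. Hence
\[
r^2=\sum_{a\in A}d^+(a)\le e(A)+|E(A,W')|,
\]
where $e(A)$ is the number of arcs inside $A$. Symmetrically, counting in-degrees of $B$, every in-neighbour of $b\in B$ lies in $B\cup W'$. Here $a\to b$ with $a\in A$ is excluded, $x\to b$ is excluded because $b\to x$, and $y\to b$ is excluded because then $xyb x$ is a $3$-cycle. This gives $r^2\le e(B)+|E(W',B)|$.

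The crude bounds $e(A)\le\binom r2$ and $|E(A,W')|\le r(\alpha-2)$ give $r+1\le 2\alpha-4$. This already contradicts $r=(c-1)\alpha/2$ for $c\ge4$, since then $r\ge 3\alpha/2>2\alpha-5$. So the only remaining work is the case $c=3$, where $r=\alpha$.

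For $c=3$ we sharpen the count using the multipartite structure. The arcs inside $A$ number at most the edges of a complete $3$-partite graph on $r$ vertices with parts of size at most $\alpha$. In addition, every $a\in A$ is non-adjacent to the other vertices of its own partite set. Those vertices lie in $A\cup B\cup W$, and $y$, which is adjacent to $a$, is not among them. We would apply Lemma~\ref{lem:cut}-type reasoning, or simply $e(A)\le \tfrac12\sum_{a\in A}(r-|A\cap V_a|)$, and add the two inequalities for $A$ and $B$. The arcs between $A$ and $B$ all run $B\to A$, so they add in-degree to $A$ and out-degree to $B$. This should force a violation of the regularity balance $\sum_{v\in A}(d^+(v)-d^-(v))=0$.

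Concretely, the net flow out of $A$ equals $|E(A,W')|-|E(B,A)|-r-|E(W',A)|$, where $-r$ accounts for the arcs from $y$ into $A$ and all other arcs leaving or entering $A$ are internal or to $W'$. This quantity must be $0$. So $|E(A,W')|\ge r$, and similarly $|E(W',B)|\ge r$, which pushes $|W'|=\alpha-2$ towards its capacity. Combining this with the partite constraint on $W'$ (it meets at most three partite sets, and $x$ and $y$ occupy slots in them) should produce the contradiction.

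I expect this tripartite bookkeeping to be the main obstacle: getting the inequality to close exactly when $c=3$ and $\alpha$ is large.
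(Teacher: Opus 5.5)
There is a genuine gap. Your setup is correct: $A\cap B=\varnothing$, there are no arcs from $A$ to $B$, $N^+(a)\subseteq A\cup W'$ for every $a\in A$, and $|W'|=\alpha-2$. The failure is in how you close the argument, in two places.

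First, the crude count does not settle $c\ge4$. It yields $r\le 2\alpha-5$. For $c=4$ you have $r=3\alpha/2$, and $3\alpha/2>2\alpha-5$ holds only when $\alpha<10$. For example, with $\alpha=10$ and $r=15$, your inequality $r+1\le 2\alpha-4$ reads $16\le 16$, which is no contradiction. So $c=4$ with $\alpha\ge10$ is uncovered.

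Second, the case $c=3$ is not proved; that passage is phrased as ``should force'' and ``should produce''. The net-flow bounds you derive, $|E(A,W')|\ge r$ and $|E(W',B)|\ge r$, sit far below the trivial capacity $r(\alpha-2)$, so they contradict nothing. Your flow identity also omits arcs $x\to a$ with $a\in A$, which can exist, though that only strengthens the inequality. Even adding Lemma~\ref{lem:cut}, which gives $|E(B,A)|=\varepsilon(A,B)\ge r(2r-\alpha)/2$, you only get $r\le 3\alpha/2-3$. That rescues $c\ge4$ but still fails for $c=3$ once $\alpha\ge6$.

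The missing idea is to use the partite set $V_x$ of $x$, and it removes any need for a case split on $c$. The $\alpha-1$ vertices of $V_x\setminus\{x\}$ are non-adjacent to $x$, so none lies in $B$, and none equals $y$. Hence $V_x\setminus\{x\}\subseteq A\cup W'$. Since $|W'|=\alpha-2<\alpha-1$, some $a\in V_x\setminus\{x\}$ lies in $A$. By your own observation $N^+(a)\subseteq A\cup W'$, and also $N^+(a)\cap V_x=\varnothing$. Therefore $N^+(a)\subseteq (A\cup W')\setminus V_x$. Because $x\notin A\cup W'$, we have $|(A\cup W')\cap V_x|=\alpha-1$, so this set has size $(r+\alpha-2)-(\alpha-1)=r-1$. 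That contradicts $d^+(a)=r$. For $\alpha=1$ the contradiction comes earlier, since $\{x,y\}\subseteq W$ while $|W|=\alpha=1$.
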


\begin{proof}[Proof  of  Theorem \ref{thm:alphac-alpha-1}]
Let $xy$ be an arc of $T$, and let $n=c\alpha$.
Let $A=N_T^-(x)$ and $S=N_T^+(y)$. Then $|A|=|S|=r$.
\begin{claim}\label{cl:T-hc}
    Let $H$ be any induced subdigraph obtained from $T$ by deleting exactly $\alpha+1$ vertices. Then $H$ has a Hamiltonian cycle.
\end{claim}
\begin{proof}
Let $H$ be any induced subdigraph obtained from $T$ by deleting exactly $\alpha+1$ vertices.
Then $|V(H)|=(c-1)\alpha-1$ and $i_g(H)\le\alpha+1$.
Since $c\ge 7$ and $\alpha\ge2$, we have
\[
i_g(H) \le \alpha+1 \le\frac{(c-4)\alpha+1}{2}.
\]
Hence, by Theorem~\ref{thm:yeo-hc}, $H$ has a Hamiltonian cycle.
\end{proof}

\medskip
\noindent
{\bf Case One: if  $A\cap S\ne\varnothing$.} Then $xy$ belongs to a triangle. 
Let $p=|A\cap S|\ge 1$.
Since $2r=n-\alpha$, we have
\[
\left|V(T)\setminus\bigl(\{x,y\}\cup A\cup S\bigr)\right|
=n-2-(2r-p)
=\alpha+p-2
\ge\alpha-1.
\]
Choose $Z\subseteq V(T)\setminus\bigl(\{x,y\}\cup A\cup S\bigr)$ such that $|Z|=\alpha-1$, and let $D=T-Z$.
Let $H=D-\{x,y\}$, so $|H|=n-\alpha-1$. Then by Claim \ref{cl:T-hc}, $H$ has a Hamiltonian cycle.
Since $Z$ is disjoint from $A\cup S$, we have
\[
|N_D^-(x)\setminus\{x,y\}|+
|N_D^+(y)\setminus\{x,y\}|
=|A|+|S|
=2r
=n-\alpha
=|V(H)|+1.
\]
By Lemma~\ref{lem:path-extension}, take $P=xy$ in $D$.
It follows that $xy$ belongs to a cycle of length $\ell$ for every
$\ell\in
\{3,4,\ldots,n-\alpha+1\}.$ Hence, this gives $n-\alpha-1$ distinct cycle lengths.

\medskip
\noindent
{\bf Case Two: if  $A\cap S=\varnothing$.}
By Lemma~\ref{lem:short-cycle}, $xy$ belongs to a $4$-cycle $C_4=xywax$ where $w\in S$, $a\in A$, and $w\to a$.
Hence, $a\in A\cap N_T^+(w)$. Note that $\{x,y,w\}\cap(A\cup N_T^+(w))=\varnothing$. Then 
\[
\begin{aligned}
\left|V(T)\setminus
\bigl(\{x,y,w\}\cup A\cup N_T^+(w)\bigr)\right|
&=n-3-|A\cup N_T^+(w)|\\
&=n-3-(|A|+|N_T^+(w)|-|A\cap N_T^+(w)|)\\
&\ge n-3-2r+1\\
&=\alpha-2.
\end{aligned}
\]
Choose $Z\subseteq
V(T)\setminus
\bigl(\{x,y,w\}\cup A\cup N_T^+(w)\bigr)$ such that $|Z|=\alpha-2$, and let $D=T-Z$.
Let $H=D-\{x,y,w\}$, so $|H|=n-\alpha-1$. Then by Claim \ref{cl:T-hc}, $H$ has a Hamiltonian cycle.
By the choice of $Z$, we have 
\[
|N_D^-(x)\setminus \{x,y,w\}|
+|N_D^+(w)\setminus \{x,y,w\}|
=|A|+|N_T^+(w)|=2r
=n-\alpha=|V(H)|+1.\]
By Lemma~\ref{lem:path-extension}, take $P=xyw$ in $D$.
It follows that $xy$ belongs to a cycle of length $\ell$ for every $\ell \in \{4,5,\ldots,n-\alpha+2\}.$
Thus,  $xy$ again belongs to cycles of $n-\alpha-1$ pairwise distinct lengths.
\end{proof}

\section{Proof  of   Theorem \ref{thm:asymptotic-n-minus-three}}\label{sec:c>200}

\begin{lemma}\label{lem:path-coloring}
Let $D$ be a multipartite tournament in which every partite set has cardinality at most $\alpha$, and let $U\subseteq V(D)$.
Then $U$ can be partitioned into at most $\alpha$ paths.
\end{lemma}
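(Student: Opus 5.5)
We argue by taking a path partition of $U$ with as few paths as possible and showing that it has at most $\alpha$ paths. This is the multipartite analogue of the Gallai--Roy argument, and the natural tool is the Gallai--Milgram theorem: the vertex set of any digraph can be partitioned into at most $\beta$ directed paths, where $\beta$ is its independence number. Apply it to the subdigraph $D[U]$. Since $D$ is a multipartite tournament, any two vertices in different partite sets are joined by an arc. Hence every independent set of $D[U]$ lies inside a single partite set, and so $\beta(D[U])\le\alpha$. Gallai--Milgram then gives a partition of $U$ into at most $\alpha$ paths. Single vertices count as paths of length $0$, and the empty set is trivially covered.

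The paper only permits results stated earlier, and Gallai--Milgram is not among them, so we will also give a self-contained version of its proof specialized to this setting. Among all path partitions of $U$, choose $\mathcal P$ with the minimum number $k$ of paths. Among those, choose one whose set of terminal vertices is minimal in the usual Gallai--Milgram sense.

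Suppose $k>\alpha$. There are $k$ terminal vertices, and at most $\alpha$ of them can lie in any one partite set. So two terminal vertices $t_1,t_2$, ending paths $P_1,P_2$, lie in different partite sets and are therefore adjacent, say $t_1\to t_2$. The standard induction on $|U|$ now applies. Delete $t_2$ and apply the induction hypothesis, strengthened to say that there is a minimum partition whose terminal set is contained in the old terminal set minus $t_2$ (plus the predecessor of $t_2$). Then either we merge $P_1$ with $t_2$ to reduce the number of paths, or we obtain a partition with a strictly smaller terminal set, contradicting minimality. This is exactly the Gallai--Milgram induction, and it only needs the observation that $k>\alpha$ terminals contain an adjacent pair.

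The main obstacle is purely formal: carrying the strengthened induction hypothesis through correctly. Citing Gallai--Milgram avoids this entirely. The independence bound $\beta(D[U])\le\alpha$ itself is immediate from the definition of a multipartite tournament.
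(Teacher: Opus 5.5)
Your proof is correct, but it takes a heavier route than the paper. You bound the independence number of $D[U]$ by $\alpha$ and invoke Gallai--Milgram. The paper instead uses the partite structure to build the cover directly. It colors $U$ with colors from $\{1,\ldots,\alpha\}$ so that vertices in the same partite set get distinct colors. Each color class then meets every partite set at most once, so it induces a tournament, and R\'edei's theorem gives each class a Hamiltonian path. In effect, the underlying complete multipartite graph on $U$ is covered by at most $\alpha$ cliques, and R\'edei's theorem is exactly the case $\beta=1$ of Gallai--Milgram. So the lemma follows in two lines from a classical fact, with no induction. This also shows your worry about self-containment was unnecessary: the paper itself uses R\'edei's theorem without stating it. What your route buys is generality, since it needs only $\beta(D[U])\le\alpha$. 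In the later application (where the path through $z$ must end at $z$), any partition into at most $\alpha$ paths suffices, so both versions serve equally.

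One caution about your self-contained sketch: as written it does not stand on its own. The strengthened hypothesis is not about ``a minimum partition.'' It should say that every path partition with more than $\beta$ paths can be replaced by one with one fewer path whose terminal set is a proper subset of the old one. In the inductive step, let $T''$ be the new terminal set in $D[U]-t_2$. If $t_2^-\in T''$, append $t_2$ after $t_2^-$. Otherwise $T''=T\setminus\{t_2\}$, so $t_1\in T''$, and you append $t_2$ after $t_1$. Your sketch omits this bookkeeping, though your citation of Gallai--Milgram makes it moot.
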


\begin{proof}
Since every partite set has cardinality at most $\alpha$, choose a map $\phi:U\to\{1,\ldots,\alpha\}$ such that vertices in the same partite set receive distinct colors.
Then each color class contains at most one vertex from each partite set. Thus, each color class induces a tournament.

Every tournament has a Hamiltonian path.
Thus, each nonempty color class has a  Hamiltonian path. Since every partite set has cardinality at most $\alpha$, $U $  can be partitioned into at most $\alpha$ paths.
\end{proof}

We next show the main lemma in this section, which states that an arc lies on a Hamiltonian cycle after deleting any set of at most $\alpha$ vertices avoiding its endpoints.

\begin{lemma}\label{lem:T-Z}
Let $c\ge 93$ and $\alpha\ge2$, and let $T$ be an $r$-regular $c$-partite tournament in which every partite set has cardinality $\alpha$.
Let $xy$ be an arc of $T$, and let $Z\subseteq V(T)\setminus\{x,y\}$ satisfy $|Z|\le\alpha$.
Then $T-Z$ has a Hamiltonian cycle containing $xy$.
\end{lemma}

\begin{proof}
Let $n=c\alpha$ and $s=|Z|$.
Let $G=T-Z$ and $N=|V(G)|=n-s$.
Let $d_0=r-s$, where $r=(n-\alpha)/2$.
Then $d_G^+(v),d_G^-(v)\ge d_0$ for every $v\in V(G)$.

Let $A=N_G^-(x)$, let $U=V(G)\setminus(A\cup\{x,y\})$, and let $u=|U|$.
Since $u=n-s-|A|-2$, $r-s\le|A|\le r$, and $s\le\alpha$, we have $r-2\le u\le n-r-2$.
Since $r-2>0$, we have $U\ne\varnothing$.

Every vertex of $G[U]$ is nonadjacent to at most $\alpha-1$ other vertices, so $|E(G[U])|\ge u(u-\alpha)/2$.
Choose $z\in U$ with maximum outdegree in $G[U]$.
Then $d_{G[U]}^+(z)\ge(u-\alpha)/2$.

Let $B=V(G)\setminus\bigl(A\cup N_G^+(z)\cup\{x,y,z\}\bigr)$ and let $b=|B|$.
Since $B=U\setminus(\{z\}\cup N_G^+(z))$, we have
\[
b=u-1-d_{G[U]}^+(z)
\le\frac{u+\alpha-2}{2}
\le\frac n4+\frac{3\alpha}{4}-2.
\]

By Lemma~\ref{lem:path-coloring}, $B\cup\{z\}$ can be partitioned into pairwise vertex-disjoint paths $Q_1,\ldots,Q_t$, where $1\le t\le\alpha$.
Since $z$ has no out-neighbor in $B$, the path containing $z$ ends at $z$. Label this path $Q_t$, and let $Q_0=xy$.
For each $i\in\{0,1,\ldots,t-1\}$, let $u_i$ be the terminal vertex of $Q_i$, and let $v_i$ be the initial vertex of $Q_{i+1}$.

\begin{claim}\label{cl:short-covering-path}
There are paths $R_0,R_1,\ldots,R_{t-1}$ such that, for every $i\in\{0,1,\ldots,t-1\}$,
\begin{enumerate}[(a)]
\item $R_i$ is a path from $u_i$ to $v_i$;
\item $R_i$ has length at most six;
\item the internal vertices of $R_0,R_1,\ldots,R_{t-1}$ are pairwise disjoint and lie outside $B\cup\{x,y,z\}$.
\end{enumerate}
Moreover, there is a path $P$ from $x$ to $z$ beginning with $xy$ and containing every vertex of $B$, with $p=|V(P)|\le n/4+23\alpha/4+1$.
\end{claim}

\begin{proof}

Suppose that $R_0,R_1,\ldots,R_{i-1}$ have been constructed with the required properties.
Let
\[
F_i=\bigl((B\cup\{x,y,z\})\setminus\{u_i,v_i\}\bigr)
\cup\bigcup_{j=0}^{i-1}\bigl(V(R_j)\setminus\{u_j,v_j\}\bigr),
\]
where the second union is empty when $i=0$.
Let $f_i=|F_i|$ and $J_i=G-F_i$.
Since $u_i$ and $v_i$ are distinct vertices of $B\cup\{x,y,z\}$ and each previously constructed path has at most five internal vertices, we have
\[
f_i\le b+1+5i
\le b+1+5(t-1)
\le\frac n4+\frac{23\alpha}{4}-6.
\]
Let $m_i=|V(J_i)|=n-s-f_i$ and $d_i=r-s-f_i$.
Then every vertex of $J_i$ has indegree and outdegree at least $d_i$.
Since $s\le\alpha$ and $f_i\le n/4+23\alpha/4-6$, we have
\[
4d_i-m_i
=n-2\alpha-3s-3f_i
\ge\frac{(c-89)\alpha}{4}+18
>0,
\]
where the last inequality follows from $c\ge93$.

For $j\in\{0,1,2,3\}$, let
\[
X_j=\left\{v\in V(J_i):\text{$J_i$ contains a path from $u_i$ to $v$ of length at most $j$}\right\},
\]
and let $x_j=|X_j|$.
Note that $X_0=\{u_i\}$ and $X_1=\{u_i\}\cup N_{J_i}^+(u_i)$.
Thus, $x_1\ge d_i+1$.

We want to show that $x_3\ge2d_i+1$.
Suppose not.
Since $x_1\ge d_i+1$, we have $x_2\le x_3<2d_i+1<2(d_i+1)\le2x_1$.
Then $x_2-x_1<x_2/2$.
Note that every out-neighbor of a vertex in $X_2$ belongs to $X_3$, and
every out-neighbor of a vertex in $X_1$ belongs to $X_2$.
Therefore, there is no arc from $X_1$ to $X_3\setminus X_2$.

We now count the arcs $vw\in E(J_i)$ with $v\in X_2$.
Since every vertex of $J_i$ has outdegree at least $d_i$, there are at least $d_i x_2$ such arcs.
At most $\binom{x_2}{2}$ of them have both endpoints in $X_2$.
For every remaining arc $vw$, we have $v\in X_2\setminus X_1$ and $w\in X_3\setminus X_2$.
There are at most $(x_2-x_1)(x_3-x_2)$ such arcs. Therefore,
\[
d_i x_2\le\binom{x_2}{2}+(x_2-x_1)(x_3-x_2).
\]
Since $x_2-x_1<x_2/2$ and $x_3-x_2\ge0$, we have
\[
d_i x_2
\le\frac{x_2(x_2-1)}2+\frac{x_2(x_3-x_2)}2=\frac{x_2(x_3-1)}2
<d_i x_2,
\]
where the last inequality follows from $x_3<2d_i+1$, a contradiction.
Thus, $x_3\ge2d_i+1$.

Let
\[
Y_3=\left\{v\in V(J_i):\text{$J_i$ contains a path from $v$ to $v_i$ of length at most three}\right\}.
\]
Similarly, we have $y_3=|Y_3|\ge2d_i+1$.
Since $4d_i>m_i$, we have $x_3+y_3\ge4d_i+2>m_i$.
Thus, $X_3\cap Y_3\ne\varnothing$.

Choose $w\in X_3\cap Y_3$.
By the definitions of $X_3$ and $Y_3$, there is a path from $u_i$ to $w$ of length at most three and a path from $w$ to $v_i$ of length at most three.
Their union contains a path $R_i$ from $u_i$ to $v_i$ of length at most six.
By the definition of $F_i$, the internal vertices of $R_i$ lie outside $B\cup\{x,y,z\}$ and avoid the internal vertices of $R_0,\ldots,R_{i-1}$.
This completes the inductive construction.

Let $P=Q_0R_0Q_1\cdots R_{t-1}Q_t$.
This path begins with $xy$, ends at $z$, and contains every vertex of $B$.
Since $Q_0,\ldots,Q_t$ contain $b+3$ vertices in total and each of the $t$ connecting paths has at most five internal vertices, we have
\[
p\le b+3+5t
\le\frac n4+\frac{23\alpha}{4}+1.\qedhere
\]
\end{proof}

Let $P$ be the path given by Claim~\ref{cl:short-covering-path}, and let $W=V(G)\setminus V(P).$
Since $s\le\alpha$ and $p\le n/4+23\alpha/4+1$, we have $|W|=n-s-p\ge3(c-9)\alpha/4-1\ge2$.
Since $B\cup \{x,y,z\}\subseteq V(P)$, we have $W\subseteq A\cup N_G^+(z).$

Let $D$ be a multipartite tournament with vertex set $W\cup\{v_P\}$, where $\{v_P\}$ is a singleton partite set.
Let $D[W]=G[W]$.
For each $w\in W$, orient the edge incident with $v_P$ so that
\[
(A\cap W)\to v_P
\qquad\text{and}\qquad
v_P\to W\setminus A.
\]

\begin{claim}\label{d<r}
Every vertex of $D$ has indegree and outdegree at most $r$.
\end{claim}

\begin{proof}
Let $D_0=D-\{v_P\}=T[W]$.
If $w\in A\cap W$, then $w\to x$ in $T$ and $x\notin W$, so $d_{D_0}^+(w)\le r-1$.
Since $w\to v_P$ is the only arc added at $w$, we have $d_D^+(w)=d_{D_0}^+(w)+1\le r$ and $d_D^-(w)=d_{D_0}^-(w)\le r$.

If $w\in W\setminus A$, then $z\to w$ because $W\setminus A\subseteq N_G^+(z)$.
Since $z\notin W$, we have $d_{D_0}^-(w)\le r-1$.
The only arc added at $w$ is $v_P\to w$, so $d_D^-(w)=d_{D_0}^-(w)+1\le r$ and $d_D^+(w)=d_{D_0}^+(w)\le r$.

Finally, $d_D^-(v_P)=|A\cap W|\le|A|\le r$ and $d_D^+(v_P)=|W\setminus A|\le|N_G^+(z)|\le r$.
This proves the claim.
\end{proof}

Since $|V(D)|=n-s-p+1$ and every partite set of $D$ has cardinality at most $\alpha$, every vertex $v\in V(D)$ satisfies
\[
d_D^+(v)+d_D^-(v)\ge |V(D)|-\alpha=2r-s-p+1.
\]
By Claim~\ref{d<r}, for every vertex $v\in V(D)$, $r-s-p+1\le d_D^+(v),d_D^-(v)\le r$. 
Thus, $i_g(D)\le s+p-1\le s+p$.

To apply Theorem~\ref{thm:yeo-hc}, we need the following claim.
\begin{claim}
$\displaystyle i_g(D)\le s+p\le \frac{n-s-p-3\alpha+3}{2}.   $ 
\end{claim}

\begin{proof}
Since $i_g(D)\le s+p$, it remains to prove that $s+p\le(n-s-p-3\alpha+3)/2$.
This inequality is equivalent to $s+p+\alpha\le(n+3)/3$.
Since $s\le\alpha$ and $p\le n/4+23\alpha/4+1$, we have
\[
s+p+\alpha
\le\frac n4+\frac{31\alpha}{4}+1
\le\frac n3+1
=\frac{n+3}{3},
\]
where the second inequality follows from $n=c\alpha$ and $c\ge93$.
This proves the claim.
\end{proof}

Therefore, by  Theorem~\ref{thm:yeo-hc}, $D$ has a Hamiltonian cycle.
Let $C_D$ be a Hamiltonian cycle of $D$, and let $a\to v_P\to b$ on $C_D$.
Thus, $a,b\in W$. 
By the definition,  $a\in A$ and $b\notin A$.
Since $W\subseteq A\cup N_G^+(z)$,  $b\in N_G^+(z)$.
Therefore, $a\to x$ and $z\to b$.

Note that $P$ is a path from $x$ to $z$ beginning with $xy$ and $V(P)\cap W=\varnothing$.
Replacing the segment $av_Pb$ of $C_D$ with the path $aPb$ gives a Hamiltonian cycle of $G$ containing $xy$.
\end{proof}

Now, we combine Lemma~\ref{lem:T-Z} with Theorem~\ref{thm:alphac-alpha-1} to prove our main result.

\begin{proof}[Proof of Theorem~\ref{thm:asymptotic-n-minus-three}]
Let $n=c\alpha$, and let $xy$ be an arbitrary arc of $T$.
If $\alpha=1$, then by Alspach's Theorem, $T$ is 3-arc-pancyclic. Therefore, we assume that $\alpha\ge 2$.

For each $s\in\{0,1,\ldots,\alpha-2\}$, choose a set $Z_s\subseteq V(T)\setminus\{x,y\}$ with $|Z_s|=s$.
By Lemma~\ref{lem:T-Z}, $T-Z_s$ has a Hamiltonian cycle containing $xy$.
Therefore, $xy$ belongs to a  cycle of every length from $n-\alpha+2$ to $n$.

{\bf Case One: if  $xy$ belongs to a triangle.}
By Theorem~\ref{thm:alphac-alpha-1}, $xy$ belongs to a  cycle of every length from $3$ to $n-\alpha+1$.
Together with Lemma~\ref{lem:T-Z}, this shows that $xy$ belongs to a  cycle of every length from $3$ to $n$.

{\bf Case Two: if  $xy$ belongs to no triangle.}
By Theorem~\ref{thm:alphac-alpha-1}, $xy$ belongs to a  cycle of every length from $4$ to $n-\alpha+2$.
Together with Lemma~\ref{lem:T-Z}, this shows that $xy$ belongs to a  cycle of every length from $4$ to $n$.

Since $xy$ was arbitrary, every arc of $T$ belongs to a  cycle of every length from $4$ to $n$.
Hence, $T$ is $4$-arc-pancyclic.
\end{proof}

The following construction shows that $4$ cannot be replaced by $3$ in Theorem~\ref{thm:asymptotic-n-minus-three}.

\begin{proposition}\label{prop:noC3}
Let $c\ge3$ and $\alpha\ge2$, where $(c-1)\alpha$ is even.
Then there is an $r$-regular $c$-partite tournament with partite sets of cardinality $\alpha$ that contains an arc belonging to no  triangle.
\end{proposition}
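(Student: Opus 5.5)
We will give an explicit construction. Index the partite sets as $V_0,V_1,\ldots,V_{c-1}$, with $V_i=\{v_{i,1},\ldots,v_{i,\alpha}\}$. The goal is a regular orientation containing two vertices $x\in V_0$, $y\in V_1$ with $x\to y$ and $N^+(y)\subseteq N^+(x)\cup V_0$; equivalently, no in-neighbour of $x$ is an out-neighbour of $y$. Since $|N^-(x)|=|N^+(y)|=r$ and $2r=(c-1)\alpha$, the $r$ in-neighbours of $x$ and the $r$ out-neighbours of $y$ must then partition $V(T)\setminus V_0$ minus suitable vertices. This suggests building $T$ so that $x$ and $y$ behave alike on most vertices.

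Our first attempt is a ``blow-up with a twist''. Start from a regular (or nearly regular) orientation $T_0$ of $K_c$ on index set $\{0,\ldots,c-1\}$, and lift it: for $i\to j$ in $T_0$, orient all arcs $V_i\to V_j$. When $c$ is odd, a regular tournament $T_0$ exists and the lift is $r$-regular. Every arc $v_{0,a}v_{1,b}$ then lies in a triangle exactly when $0\to1$ lies in a triangle of $T_0$, so a pure blow-up is not enough. We therefore modify $T_0$: take $T_0$ with $0\to1$ and choose the blow-up so that $y=v_{1,1}$ behaves differently from the other vertices of $V_1$. Concretely, $y$ gets $V_0\setminus\{x\}\to y$ and $y\to$ some set. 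Regularity is then repaired by reversing arcs along 4-cycles of the form $v_{1,1}\to w\to v_{1,2}\to w'\to v_{1,1}$; reversing such alternating cycles preserves all in- and out-degrees.

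A cleaner plan, and the one we will commit to, uses $\alpha\ge2$ through the vertices of $V_0$. Let $x=v_{0,1}$ and $x'=v_{0,2}$. Take any $r$-regular $c$-partite tournament $T'$, for instance the lift of a regular tournament when $c$ is odd, or a circulant-type construction on $\mathbb Z_{c\alpha}$ when $c$ is even and $\alpha$ is even. Then perform switchings so that $N^+(y)\cap N^-(x)=\varnothing$ for a chosen out-neighbour $y$ of $x$. The key tool is this: if $u\in N^-(x)\cap N^+(y)$, pick $u'\in N^+(x)\setminus (N^+(y)\cup\{y\})$ with $u'$ in the same partite set as $u$, or otherwise suitably chosen, and reverse the alternating 4-cycle $x,u,x',u'$. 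This requires $u\to x$, $x\to u'$, $x'\to u$, $u'\to x'$, and after the reversal $x\to u$, $u'\to x$. All degrees are preserved and the number of bad vertices $u$ drops by one, provided that $x'$ has the needed arcs and that the arcs at $y$ are not affected.

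The main obstacle is guaranteeing that the partners $u'$ exist at every step. Counting gives $|N^-(x)\cap N^+(y)|=|N^+(x)\cap N^-(y)|+(\text{correction from }V_0,V_1)$, so bad $u$ and good $u'$ come in matching numbers. The first choice is to let $x'$ be the ``twin'' of $x$ from the blow-up, meaning $N^+(x')=N^+(x)$ initially, and to switch using a different auxiliary vertex $x''$ of opposite type. If this becomes messy, the fallback is to construct a direct explicit example: make $V_0$ and $V_1$ ``reversed twins'', so that $x$ dominates the out-neighbourhood side and $y$ dominates the rest, and check regularity by a parity count using that $(c-1)\alpha$ is even.
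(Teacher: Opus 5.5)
There is a genuine gap: you never actually produce a tournament. The plan you commit to is to start from a regular $T'$ and reverse directed $4$-cycles $u\to x\to u'\to x'\to u$ until $N^-(x)\cap N^+(y)=\varnothing$. You leave the existence of the partners $u',x'$ unproven, and for the base you suggest the plan provably fails.

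If $xy$ lies in no triangle, then $N^-(x)$ and $N^+(y)$ are disjoint $r$-sets. Moreover $N^-(x)$ avoids $V_0\cup\{y\}$. If also $N^+(y)\cap V_0=\varnothing$, their union would sit inside $V(T)\setminus(V_0\cup\{y\})$, which has only $(c-1)\alpha-1<2r$ vertices. So $y$ must dominate some vertex of $x$'s partite set.

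Your switchings never touch an arc at $y$, so $|N^+(y)\cap V_0|$ is invariant. In the lift of a regular tournament this quantity is $0$, because $x\to y$ forces $V_0\to V_1$. Hence no sequence of your switchings can succeed. Concretely, the number of admissible $u'\in N^+(x)\setminus(N^+(y)\cup\{y\})$ is exactly $|N^-(x)\cap N^+(y)|+|N^+(y)\cap V_0|-1$, so the last bad vertex has no partner. In addition, every $x'\in V_0$ in the lift is a twin of $x$, so $u\to x$ implies $u\to x'$ and the required arc $x'\to u$ never exists. The auxiliary vertex ``of opposite type'' you mention does not exist there. Your fallback ``reversed twins'' idea is only a description: no orientation is specified and regularity is not checked.

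The missing ingredient is a construction in which each partite set is split, so that $y$ dominates part of $V_0$ and $x$ is dominated by part of $V_1$. The count above refines to $|N^-(x)\cap V_1|+|N^+(y)\cap V_0|\ge\alpha$, so this splitting is forced.

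The paper achieves it with $2c$ blocks $X_0,\ldots,X_{2c-1}$ (indices mod $2c$) of alternating sizes $\lfloor\alpha/2\rfloor,\lceil\alpha/2\rceil$. The partite sets are $V_i=X_i\cup X_{i+c}$, and the orientation is $X_i\to X_{i+1}\cup\cdots\cup X_{i+c-1}$. When $\alpha$ is odd, evenness of $(c-1)\alpha$ forces $c$ odd. Then $i$ and $i+c$ have opposite parity, so $|V_i|=\alpha$, and any $c-1$ consecutive blocks contain equally many blocks of each size. Hence every vertex has in- and out-degree $(c-1)\alpha/2$. For $x\in X_0$ and $y\in X_1$, one has $N^+(y)=X_2\cup\cdots\cup X_c$ and $N^-(x)=X_{c+1}\cup\cdots\cup X_{2c-1}$, which are disjoint. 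This construction attains the bound $|N^-(x)\cap V_1|+|N^+(y)\cap V_0|=|X_{c+1}|+|X_c|=\alpha$ exactly.
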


\begin{proof}
Let $a=\lfloor\alpha/2\rfloor$ and $b=\lceil\alpha/2\rceil$.
Let $X_0,X_1,\ldots,X_{2c-1}$ be pairwise disjoint independent sets such that
\[
|X_i|=
\begin{cases}
a,&\text{if $i$ is even},\\
b,&\text{if $i$ is odd}.
\end{cases}
\]
All subscripts are taken modulo $2c$.

For each $i\in\{0,1,\ldots,c-1\}$, let $V_i=X_i\cup X_{i+c}.$
If $\alpha$ is even, then $a=b=\alpha/2$, and hence $|V_i|=\alpha$.
Suppose that $\alpha$ is odd.
Since $(c-1)\alpha$ is even, $c$ is odd.
Thus, $i$ and $i+c$ have opposite parity, so $|V_i|=a+b=\alpha$.
Therefore, $V_0,V_1,\ldots,V_{c-1}$ are partite sets of cardinality $\alpha$.

For every $i$, orient 
\[
X_i\to X_{i+1}\cup X_{i+2}\cup\cdots\cup X_{i+c-1}.
\]
There are no edges between $X_i$ and $X_{i+c}$ because these two sets belong to the same partite set.

Let $v\in X_i$, then
\[
d^+(v)=(c-1)\frac{\alpha}{2}\qquad \text{and} \qquad d^-(v)=\frac{(c-1)\alpha}{2}.
\]
Thus, the resulting $c$-partite tournament is $r$-regular, where $r=(c-1)\alpha/2$.

Choose $x\in X_0$ and $y\in X_1$.
By the construction, $x\to y$.
Suppose that $xy$ belongs to a triangle $xyzx$, where $z\in X_j$.
If   $y\to z$, then $j\in\{2,3,\ldots,c\}.$ However, if $z\to x$, then $j\in\{c+1,c+2,\ldots,2c-1\}.$
These two sets of indices are disjoint.
Thus, $xy$ belongs to no triangle.
\end{proof}

\section*{Acknowledgments}

The author thanks Guoli Ding, Yubao Guo, Xiaonan Liu, and Zhiyu Wang for helpful discussions and suggestions on this problem.

The SageMath code used in Corollary~\ref{cor:alpha-two} and the constructions in Proposition~\ref{prop:noC3} were generated using OpenAI's ChatGPT.
The author also used ChatGPT for language editing and to refine the proof of Theorem~\ref{thm:asymptotic-n-minus-three}, reducing the required lower bound on $c$ from $400$ to $93$.
All mathematical arguments, constructions, code, and computational results were independently checked by the author.

\section*{Appendix: Sage verification for Corollary~\ref{cor:alpha-two}}

\begin{lstlisting}[
language=Python,
basicstyle=\ttfamily\scriptsize,
breaklines=true,
columns=fullflexible,
frame=single,
showstringspaces=false
]
from itertools import combinations
from sage.sat.solvers.satsolver import SAT

def verify(c):
    n = 2 * c
    r = c - 1
    x, y = 0, 2
    V = range(n)
    part = {v: v // 2 for v in V}

    var = {}
    nxt = 1

    for u in V:
        for v in range(u + 1, n):
            if part[u] != part[v]:
                var[u, v] = nxt
                nxt += 1

    def arc(u, v):
        if u == v or part[u] == part[v]:
            return None
        if u < v:
            return var[u, v]
        return -var[v, u]

    solver = SAT(
        solver="cryptominisat",
        threads=4
    )

    # Every vertex has outdegree r.
    for u in V:
        outgoing = [
            arc(u, v) for v in V
            if part[u] != part[v]
        ]

        for X in combinations(outgoing, r + 1):
            solver.add_clause(
                tuple(-z for z in X)
            )

        for X in combinations(
            outgoing,
            len(outgoing) - r + 1
        ):
            solver.add_clause(X)

    # Fix the prescribed arc x -> y.
    solver.add_clause((arc(x, y),))

    # Symmetry breaking.
    remaining_parts = range(2, c)

    def pattern(p):
        return (
            arc(x, 2 * p),
            arc(x, 2 * p + 1),
            arc(y, 2 * p),
            arc(y, 2 * p + 1),
        )

    def mismatch(P, value):
        return [
            -z if (value >> (3 - i)) & 1 else z
            for i, z in enumerate(P)
        ]

    for p in remaining_parts:
        a, b, d, e = pattern(p)
        solver.add_clause((a, -b))
        solver.add_clause((a, b, d, -e))
        solver.add_clause((-a, -b, d, -e))

    for p, q in zip(
        remaining_parts,
        remaining_parts[1:]
    ):
        for a in range(16):
            for b in range(a + 1, 16):
                solver.add_clause(tuple(
                    mismatch(pattern(p), a)
                    + mismatch(pattern(q), b)
                ))

    # present[ell] is forced to be true
    # whenever there is an ell-cycle
    # containing the prescribed arc.
    present = {}

    for ell in range(3, n + 1):
        present[ell] = nxt
        nxt += 1

         # Search for at most 2*c-4 distinct cycle lengths.
    for X in combinations(tuple(present.values()), 2*c-3):
        solver.add_clause(tuple(-z for z in X))

    # Encode simple paths from y.
    internal = tuple(
        v for v in V
        if v not in (x, y)
    )
    base = 1 << y

    path = {(base, y): nxt}
    solver.add_clause((nxt,))
    nxt += 1

    for size in range(2, n):
        for X in combinations(
            internal, size - 1
        ):
            mask = base + sum(
                1 << v for v in X
            )

            for v in X:
                path[mask, v] = nxt
                nxt += 1

    for (mask, v), current in tuple(
        path.items()
    ):
        if v == y:
            continue

        old = mask - (1 << v)
        transitions = []

        for u in V:
            previous = path.get((old, u))
            edge = arc(u, v)

            if previous is None or edge is None:
                continue

            transition = nxt
            nxt += 1
            transitions.append(transition)

            solver.add_clause(
                (-transition, previous)
            )
            solver.add_clause(
                (-transition, edge)
            )
            solver.add_clause(
                (-previous, -edge, transition)
            )
            solver.add_clause(
                (-transition, current)
            )

        if transitions:
            solver.add_clause(tuple(
                [-current] + transitions
            ))
        else:
            solver.add_clause((-current,))

        closing = arc(v, x)

        if closing is not None:
            ell = int(mask).bit_count() + 1
            solver.add_clause(
                (-current,
                 -closing,
                 present[ell])
            )

    model = solver()

    if model is not False:
        raise AssertionError(
            "counterexample found for c = {}"
            .format(c)
        )

    print("c =", c, "result = UNSAT")

for c in (4, 5, 6):
    verify(c)

print("verification: PASSED")
\end{lstlisting}
The output is

\begin{lstlisting}[
basicstyle=\ttfamily\scriptsize,
frame=single,
showstringspaces=false
]
c = 4 result = UNSAT
c = 5 result = UNSAT
c = 6 result = UNSAT
verification: PASSED
\end{lstlisting}

\end{document}